\newtheorem{defn}{Definition}[section]
\newtheorem{lemma}[defn]{Lemma}
\newtheorem{proposition}[defn]{Proposition}
\newtheorem{corollary}[defn]{Corollary}
\newtheorem{remark}[defn]{Remark}
\begin{document}
	
		\title{Two Colored Diagrams for Central  Configurations of the Planar Five-vortex Problem}
	\author [1] {Xiang Yu}
	\author [2] {Shuqiang Zhu}
	\affil[1] {Center for Applied Mathematics and KL-AAGDM, Tianjin University, Tianjin,  300072, China}
	\affil[2] {School of Mathematics, Southwestern 	University of Finance and Economics, Chengdu 611130, China}
	\affil[ ]{xiang.zhiy@foxmail.com, yuxiang\_math@tju.edu.cn, zhusq@swufe.edu.cn}
	\date{}
	
	\maketitle

\begin{abstract}
	We apply the  singular sequence method to investigate   the finiteness problem for stationary configurations of the planar five-vortex problem. 
	The initial step of the singular sequence method involves identifying all two-colored diagrams. These diagrams represent potential scenarios where finiteness may fail. We determined all such diagrams for the planar five-vortex problem. 
\end{abstract}

\textbf{Keywords:}: {   Point vortices;  central configuration;  finiteness; singular sequence }.

\section{Introduction}

The \emph{planar $N$-vortex problem} which originated from Helmholtz's work in 1858 \cite{helmholtz1858integrale}, considers the motion of point vortices in a fluid plane. It was given a Hamiltonian formulation by Kirchhoff as follows:
\[
\Gamma_n \dot{\mathbf{r}}_n = J \frac{\partial H}{\partial \mathbf{r}_n} = J \sum_{1 \leq j \leq N, j \ne n} \Gamma_j \Gamma_n \frac{\mathbf{r}_j - \mathbf{r}_n}{|\mathbf{r}_j - \mathbf{r}_n|^2}, \quad n = 1, \ldots, N.
\]
Here, $J = \begin{bmatrix} 0 & 1 \\ -1 & 0 \end{bmatrix}$, 
$\mathbf{r}_n = (x_n, y_n) \in \mathbb{R}^2$, and $\Gamma_n$ $(n = 1, \ldots, N)$ are the positions and vortex strengths (or vorticities) of the vortices, and 
the Hamiltonian is $H = -\sum_{1 \leq j < k \leq N} \Gamma_j \Gamma_k \ln|\mathbf{r}_j - \mathbf{r}_k|$, where $|\cdot|$ denotes the Euclidean norm in $\mathbb{R}^2$. The $N$-vortex problem is a widely used model for providing finite-dimensional approximations to vorticity evolution in fluid dynamics, especially when the focus is on the trajectories of the vorticity centers rather than the internal structure of the vorticity distribution \cite{Newton2001}.

An interesting set of special solutions of the dynamical system are homographic solutions, where the relative shape of the configuration remains constant during the motion. An excellent review of these solutions can be found in \cite{aref2003vortex, Newton2001}. 
Following O'Neil, we refer to the corresponding configurations as \emph{stationary}. The only stationary configurations are equilibria, rigidly translating configurations (where the vortices move with a common velocity), relative equilibria (where the vortices rotate uniformly), and collapse configurations (where the vortices collide in finite time) \cite{o1987stationary}.

 The equations governing stationary configurations are similar to those describing central configurations in celestial mechanics. 
Albouy and Kaloshin introduced a novel method to study the finiteness of five-body central configurations in celestial mechanics \cite{Albouy2012Finiteness}. The first author successfully extended this approach to fluid mechanics. Using this new method, the first author established not only the finiteness of four-vortex relative equilibria for any four nonzero vorticities but also the finiteness of four-vortex collapse configurations for a fixed angular velocity. This represents the first result on the finiteness of collapse configurations for $N \geq 4$ \cite{hampton2009finiteness, yu2021Finiteness}.

In this paper, we focus on the finiteness of five-vortex relative equilibria and collapse configurations.   We apply the singular sequence method developed by the first author in \cite{yu2021Finiteness}. 	The initial step of the singular sequence method involves identifying all two-colored diagrams. These diagrams represent potential scenarios where finiteness may fail. We determined all such diagrams for the five-vortex problem.

The paper is structured as follows. In Sect. \ref{sec:basicnotations}, we introduce notations and definitions. In Sect. \ref{sec:pri},  we   briefly review the singular sequence method and the two-colored diagrams. In Sect. \ref{sec:moreproperty}, we identify constraints when some particular sub-diagrams appear. In Sect. \ref{sec:matrixrules}, we determine all possible two-colored diagrams for the planar five-vortex problem. 
In Sect. \ref{sec:diagram&constraints}, we sketch those diagrams.

\section{Basic notations}\label{sec:basicnotations}
\indent\par
We recall some basic notations on stationary configurations and direct readers to a more comprehensive introduction provided by O'Neil \cite{o1987stationary} and Yu \cite{yu2021Finiteness}.

We represent vortex positions $\mathbf{r}_n \in \mathbb{R}^2$ as complex numbers $z_n \in \mathbb{C}$. The equations of motion are  $\dot{z}_n = \textbf{i}V_n$, where
\begin{equation}\label{vectorfield}
	V_n= \sum_{1 \leq j \leq N, j \neq n} \frac{\Gamma_j z_{jn}}{r_{jn}^2}= \sum_{ j \neq n} \frac{\Gamma_j }{{\overline{z}_{jn}}}.
\end{equation}
Here, $z_{jn} = z_n - z_j$, $r_{jn} = |z_{jn}| = \sqrt{z_{jn}{\overline{z}_{jn}}}$, $\textbf{i} = \sqrt{-1}$, and the overbar denotes complex conjugation.

Let $\mathbb{C}^N= \{ z = (z_1,  \ldots, z_N):z_j \in \mathbb{C}, j = 1,  \ldots, N \}$ denote the space of configurations for $N$ point vortex. The collision set is defined as  $\Delta=\{ z \in \mathbb{C}^N:z_j=z_k ~~\emph{for some}~~ j\neq k  \} $. The space of collision-free configurations is given by $\mathbb{C}^N \backslash \Delta$.

\begin{defn}\label{def:LI}
	The following quantities and notations are defined:
	\begin{center}
		$\begin{array}{cc}
		\text{Total vorticity} & \Gamma =\sum_{j=1}^{N}\Gamma_j  \\
		\text{Total vortex angular momentum} & L =\sum_{1\leq j<k\leq N}\Gamma_j\Gamma_k.  
		\end{array}$
	\end{center}
	For $J=\{j_1, ..., j_n\}\subset \{1, ..., N\}$, we also define 
	\[ \Gamma_J=\Gamma_{j_1, ..., j_n}=\sum_{j\in J} \Gamma_j, \ L_J=L_{j_1, ..., j_n}=\sum_{j<k, j,k \in J} \Gamma_j \Gamma_k.\] 
\end{defn}

 A motion is called homographic if the relative shape remains constant. 
	Following  O'Neil \cite{o1987stationary}, we term  a corresponding  configuration as a  \emph{stationary configuration}.  Equivalently, 
\begin{defn} \label{def-1}
	A configuration $z \in \mathbb{C}^N \backslash \Delta$ is stationary if there exists a constant $\Lambda\in {\mathbb{C}}$ such that
	\begin{equation}\label{stationaryconfiguration}
		V_j-V_k=\Lambda(z_j-z_k), ~~~~~~~~~~ 1\leq j, k\leq N.
	\end{equation}
\end{defn}

 There are  only four kinds of homographic motions,  equilibria, translating with a common velocity, uniformly rotating, and   homographic motions that collapse in finite time.   
	Following \cite{o1987stationary, hampton2009finiteness, yu2021Finiteness},  we term 
	the  stationary
	configurations  corresponding to these four classes of  homographic motions as
	equilibria, rigidly translating configurations, relative equilibria  and collapse configurations. 
	Equivalently,

\begin{defn}\label{def-2}
	\begin{itemize}
		\item[i.] $z \in \mathbb{C}^N \backslash \Delta$ is an \emph{equilibrium} if $V_1=\cdots=V_N=0$.
		\item[ii.] $z \in \mathbb{C}^N \backslash \Delta$ is \emph{rigidly translating} if $V_1=\cdots=V_N=c$ for some $c\in \mathbb{C}\backslash\{0\}$.
		\item[iii.] $z \in \mathbb{C}^N \backslash \Delta$ is  a \emph{relative equilibrium} if there exist constants $\lambda\in \mathbb{R}\backslash\{0\},z_0\in \mathbb{C}$ such that $V_n=\lambda(z_n-z_0),~~~~~~~~~~ 1\leq n\leq N$.
		\item[iv.] $z \in \mathbb{C}^N \backslash \Delta$ is a \emph{collapse configuration} if there exist constants $\Lambda,z_0\in \mathbb{C}$ with $\emph{Im}(\Lambda)\neq0$ such that $V_n=\Lambda(z_n-z_0),~~~~~~~~~~ 1\leq n\leq N$.
	\end{itemize}
\end{defn}

\begin{defn}
	A configuration $z$ is equivalent to $z'$ if there exist $a, b \in \mathbb{C}$ with $b \neq 0$ such that $z'_n = b(z_n + a)$ for $1 \leq n \leq N$.

		A configuration is called translation-normalized if its translation freedom is removed, rotation-normalized if its rotation freedom is removed, and dilation-normalized if its dilation freedom is removed. 
		A configuration normalized in translation, rotation, and dilation is termed a \textbf{normalized configuration}. 
\end{defn}

	We count the stationary configurations according to the equivalence classes. Counting equivalence classes is the same as counting normalized configurations. Note that the removal of any of these three freedoms can be performed in various  ways.

\section{Singular sequences for central configurations and coloring rules}\label{sec:pri} 
\label{Preliminaries}

In this section, we briefly review   the basic elements of the Albouy-Kaloshin approach developed   by Yu  \cite{yu2021Finiteness} for the finiteness of relative equilibria and collapse configurations, including,  among others, the notation of central configurations, the extended system, the notation of singular sequences, the two-colored diagrams, and the rules for  the two-colored diagrams. For  a more comprehensive introduction, please refer to   \cite{yu2021Finiteness}.

\subsection{Central configurations of the planar N-vortex problem}
Recall Definition \ref{def-2}. Equations of relative equilibria and collapse configurations share the form:
\begin{equation}\label{stationaryconfiguration1}
	V_n=\Lambda(z_n-z_0),~~~~~~~~~~ 1\leq n\leq N,
\end{equation}
where $\Lambda\in \mathbb{R}\backslash\{0\}$  indicates relative equilibria and $\Lambda\in \mathbb{C}\backslash\mathbb{R}$  indicates  collapse configurations.
\begin{defn}\label{def:cc}
	Relative equilibria and collapse configurations are both called \textbf{central configurations}.
\end{defn}

The equations (\ref{stationaryconfiguration1}) read 
\begin{equation}\label{stationaryconfiguration2}
	\Lambda z_n= V_n,~~~~~~~~~~ 1\leq n\leq N,
\end{equation}
if the translation freedom is removed, i.e., we substitute $z_n$ with $z_n + z_0$ in equations (\ref{stationaryconfiguration2}). The solutions then satisfy:
\begin{equation}\label{center0}
	M=0, \ \Lambda I= L.
\end{equation}
To remove dilation freedom, we enforce $|\Lambda| = 1$.

Introduce a new set of variables $w_n$ and a ``conjugate"
relation:
\begin{equation}\label{stationaryconfiguration3}
	\Lambda z_n=\sum_{ j \neq n} \frac{\Gamma_j }{{w_{jn}}},\ \ 
	\overline{\Lambda} w_n=\sum_{ j \neq n} \frac{\Gamma_j }{{z_{jn}}},\ \ \  1\leq n\leq N,
\end{equation}
where $z_{jn}=z_{n}-z_{j}$ and $w_{jn}=w_{n}-w_{j}$.

The rotation symmetry of \eqref{stationaryconfiguration2} leads to   the invariance of
(\ref{stationaryconfiguration3}) under the map 
\[R_a: (z_1, ..., z_n, w_1, ..., w_n) \mapsto (az_1, ..., az_N, a^{-1} w_1, ..., a^{-1}w_N)\]
for any $a\in\mathbb{C}\backslash \{0\}$. 

Introduce  the variables $Z_{jk},W_{jk}\in \mathbb{C}$ $(1\leq j< k\leq N)$ such that
$Z_{jk}=1/w_{jk}, W_{jk}=1/z_{jk}$. For $1\leq k< j\leq N$ we set $Z_{jk}=-Z_{kj}, W_{jk}=-W_{kj}$. Then equations (\ref{stationaryconfiguration2}) together with the condition $z_{12}\in\mathbb{R}$ and $|\Lambda|=1$ are embedded into the following extended system
\begin{equation}\label{equ:complexcc}
	\begin{array}{cc}
		\Lambda z_n=\sum_{ j \neq n} \Gamma_j Z_{jn},&1\leq n\leq N, \\
		\overline{\Lambda} w_n=\Lambda^{-1} w_n=\sum_{ j \neq n} \Gamma_j W_{jn},& 1\leq n\leq N, \\
		Z_{jk} w_{jk}=1,&1\leq j< k\leq N, \\
		W_{jk} z_{jk}=1,&1\leq j< k\leq N, \\
		z_{jk}=z_k-z_j,~~~  w_{jk}=w_k-w_j,&1\leq j, k\leq N, \\
		Z_{jk}=-Z_{kj},~~~ W_{jk}=-W_{kj},&1\leq k< j\leq N, \\
		z_{12}=w_{12}.
	\end{array}
\end{equation}
This is a polynomial system in the  variables $\mathcal{Q}=(\mathcal{Z},\mathcal{W})\in\mathbb{C}^{2\mathfrak{N}}$, here
\begin{center}
	$\mathcal{Z}=(\mathcal{Z}_{1},\mathcal{Z}_{2},\ldots,\mathcal{Z}_{\mathfrak{N}})=(z_1,z_2,\ldots,z_N,Z_{12},Z_{13},\ldots,Z_{(N-1)N})$,
	$\mathcal{W}=(\mathcal{W}_{1},\mathcal{W}_{2},\ldots,\mathcal{W}_{\mathfrak{N}})=(w_1,w_2,\ldots,w_N,W_{12},W_{13},\ldots,W_{(N-1)N})$.
\end{center}
and $\mathfrak{N}=N(N+1)/2$.  

\begin{defn}\label{def:positivenormalizedcentralconfiguration}
 A \emph{complex normalized} central configuration of the planar
		$N$-vortex problem is a solution of (\ref{equ:complexcc}).  A \emph{real  normalized} central configuration of the planar
		$N$-vortex problem is a  complex normalized central configuration  satisfying $z_n={\overline{w}}_n$ for any $n=1, \ldots, N$.  
\end{defn}

 Note that a real  normalized central configuration of Definition \ref{def:positivenormalizedcentralconfiguration} is exactly  a central configuration of 
	Definition \ref{def:cc}.  	
We will use the name ``distance" for the $r_{jk}=\sqrt{z_{jk}{w_{jk}}}$. Strictly speaking, the distances $r_{jk}=\sqrt{z_{jk}{w_{jk}}}$ are now bi-valued. However, only the squared distances appear in the system, so we shall  understand $r^2_{jk}$ as $z_{jk}w_{jk}$ from now on.

\subsection{Singular sequences}
Let $\|\mathcal{Z}\|=\max_{j=1,2,\ldots,\mathfrak{N}}|\mathcal{Z}_{j}|$ be the modulus of the maximal component of
the vector $\mathcal{Z}\in \mathbb{C}^\mathfrak{N}$. Similarly, set  $\|\mathcal{W}\|=\max_{k=1,2,\ldots,\mathfrak{N}}|\mathcal{W}_{k}|$.

One important feature of System \eqref{equ:complexcc} is the symmetry: 
if $\mathcal Z, \mathcal W$ is a solution, so is $ a\mathcal Z, a^{-1} \mathcal W$ for any $a\in\mathbb{C}\backslash\{0\}$. Thus, we can replace 
the  normalization $z_{12}=w_{12}$  in System \eqref{equ:complexcc}  by 
$\|\mathcal{Z}\|=\|\mathcal{W}\|$.  From now on, we consider System \eqref{equ:complexcc} with this new normalization.

Consider
a sequence $\mathcal{Q}^{(n)}$, $n=1,2,\ldots$, of solutions  of (\ref{equ:complexcc}).  Take a sub-sequence such that the maximal component of $\mathcal{Z}^{(n)}$ is fixed, i.e., there is a $j\in \{1,2,\ldots,\mathcal{N}\}$ that is  independent 
of $n$ such that  $\|\mathcal{Z}^{(n)}\|=|\mathcal{Z}^{(n)}_{j}|$. 
Extract again in such a way that the sequence $\mathcal{Z}^{(n)}/\|\mathcal{Z}^{(n)}\|$ converges.
Extract again  in such a way that  the maximal component of $\mathcal{W}^{(n)}$ is fixed. Finally,  extract  in
such a way that the sequence $\mathcal{W}^{(n)}/\|\mathcal{W}^{(n)}\|$ converges.

\begin{defn}[Singular sequence]
	Consider a sequence of complex normalized central configurations with the property  that $\mathcal{Z}^{(n)}$ is unbounded. A
	sub-sequence extracted by the above process is called
	a \emph{singular sequence}.
\end{defn}

\begin{lemma}\label{Eliminationtheory}\cite{Albouy2012Finiteness}
	Let $\mathcal{X}$ be a closed algebraic subset of $\mathbb{C}^m$ and $f:\mathbb{C}^m\rightarrow \mathbb{C}$ be a
	polynomial. Either the image $F(\mathcal{X})\subset\mathbb{ C}$ is a finite set, or it is the complement
	of a finite set. In the second case one says that f is dominating.
\end{lemma}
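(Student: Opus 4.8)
The plan is to recognize this as a standard consequence of Chevalley's theorem on the constructibility of images under morphisms of affine varieties, combined with the elementary classification of constructible subsets of the affine line. I would first recast the image $f(\mathcal{X})$ as the projection of an auxiliary closed set, then invoke constructibility, and finally exploit the fact that the target $\mathbb{C}$ is one-dimensional. To set this up, write $\mathcal{X}$ as the common zero set of finitely many polynomials $g_1,\dots,g_s$, and form the graph
\[
\mathcal{Y}=\{(x,c)\in\mathbb{C}^m\times\mathbb{C} : x\in\mathcal{X},\ c=f(x)\}.
\]
Since $f$ is a polynomial, $\mathcal{Y}$ is the common zero set of $g_1,\dots,g_s$ together with the polynomial $c-f(x)$, hence a closed algebraic subset of $\mathbb{C}^{m+1}$; and $f(\mathcal{X})$ is exactly the image $\pi(\mathcal{Y})$ under the projection $\pi:\mathbb{C}^{m+1}\to\mathbb{C}$ onto the last coordinate.

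Next I would apply Chevalley's theorem: the projection $\pi$ is a morphism of affine varieties, and the image of a constructible set (in particular of the closed set $\mathcal{Y}$) under such a morphism is constructible, so $f(\mathcal{X})=\pi(\mathcal{Y})$ is a constructible subset of $\mathbb{C}$. It then remains to classify the constructible subsets of the affine line. The only closed subsets of $\mathbb{C}$ are the finite sets and the whole line, and the only nonempty open subsets are the cofinite sets; a constructible set is a finite union of intersections of an open with a closed set, each such intersection is either finite or cofinite, and a finite union of finite and cofinite sets is again finite or cofinite. This yields precisely the stated dichotomy, with the cofinite case being exactly the case where $f$ is dominating.

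The main obstacle is Chevalley's theorem itself, which is the only non-elementary input; everything after it is bookkeeping on $\mathbb{A}^1$. If I wanted a proof faithful to the lemma's name, I would instead eliminate the variables $x_1,\dots,x_m$ from the system $g_1=\cdots=g_s=0,\ c=f(x)$ by resultants or Gröbner bases to describe directly the set of values $c$ for which the fiber is nonempty; the elimination step is exactly where constructibility, and hence the finite-or-cofinite alternative, appears. An alternative route that makes the dichotomy especially transparent is to decompose $\mathcal{X}$ into its finitely many irreducible components: on each component $f$ is either constant, contributing a single point, or non-constant, in which case the closure of its image is all of $\mathbb{C}$ so that the image contains a cofinite set. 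The image of $\mathcal{X}$ is then the finite union of these contributions, hence finite when $f$ is constant on every component and cofinite as soon as it is non-constant on some component.
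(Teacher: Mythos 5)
Your proof is correct, but note that the paper itself offers no proof to compare against: this lemma is quoted verbatim from Albouy--Kaloshin \cite{Albouy2012Finiteness}, where it is justified by classical elimination theory --- which is mathematically the same content as your main argument (form the graph of $f$ over $\mathcal{X}$, project to the line, and observe that the image is constructible, hence finite or cofinite in $\mathbb{A}^1$). Your graph construction, the appeal to Chevalley's theorem, and the classification of constructible subsets of $\mathbb{C}$ are all accurate, so the argument stands as a complete proof of the cited statement. One small caution on your ``especially transparent'' alternative via irreducible components: for a component on which $f$ is non-constant, density of the image in $\mathbb{C}$ does \emph{not} by itself imply that the image contains a cofinite set (a dense subset of $\mathbb{C}$ can be countable); you still need constructibility of the image --- i.e., Chevalley, or the standard fact that a dominant morphism of irreducible varieties has image containing a nonempty open set --- to pass from ``dense'' to ``cofinite.'' Since you invoke Chevalley in the main line of the proof this is only a gloss, but as written that final paragraph would not be self-contained. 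Finally, the stray capital in ``$F(\mathcal{X})$'' is a typo already present in the paper's statement, not an issue with your argument.
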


\subsection{The  two-colored diagrams} \label{sec:rule}

For two sequences of non-zero numbers, $a, b$, we use $a\sim b$,  $a\prec b$, $ a\preceq b$,  and $ a \approx b$ to represent ``$a/b\rightarrow 1$'', ``$a/b\rightarrow 0$'', ``$a/b$ is bounded'' and ``$a\preceq b$, $a\succeq  b$'' respectively.  

Recall that  a  singular sequence satisfy the property   $\|\mathcal{Z}^{(n)}\|=\|\mathcal{W}^{(n)}\|\to \infty$.  Set $\|\mathcal{Z}^{(n)}\|=\|\mathcal{W}^{(n)}\|=1/\epsilon^2$. Then $\epsilon\rightarrow 0$.  Following Albouy-Kaloshin, \cite{Albouy2012Finiteness}, the \emph{two-colored diagram} was introduced in \cite{yu2021Finiteness} to  classify the singular sequences.  Given a singular sequence, the indices of the vertices  will be written down. 
The first color, called the $z$-color (red),   is used to mark the maximal order components of $\mathcal{Z}$. If $z_k\approx \epsilon^{-2}$,  draw a $z$-circle around the
vertex $\textbf{k}$; If   $Z_{jk}\approx \epsilon^{-2}$,  draw a $z$-stroke between vertices $\textbf{k}$ and $\textbf{j}$. 
They consist  the $z$-diagram. 
The second color, called the $w$-color (blue and dashed),   is used to mark the maximal order components of $\mathcal W$ in similar manner. Then we also have the  $w$-diagram. The two-colored diagram is the combination of the  $z$-diagram and the $w$-diagram,  see Figure \ref{fig:edges}.

If there
is either a $z$-stroke, or a $w$-stroke, or both between vertex $\textbf{k}$ and vertex $\textbf{l}$, we say that there is an edge between them.  There are three types of edges,
$z$-edges, $w$-edges and $zw$-edges, see Figure \ref{fig:edges}. 

\begin{figure}[h!]
	\centering
	\includegraphics[width=0.6\textwidth]{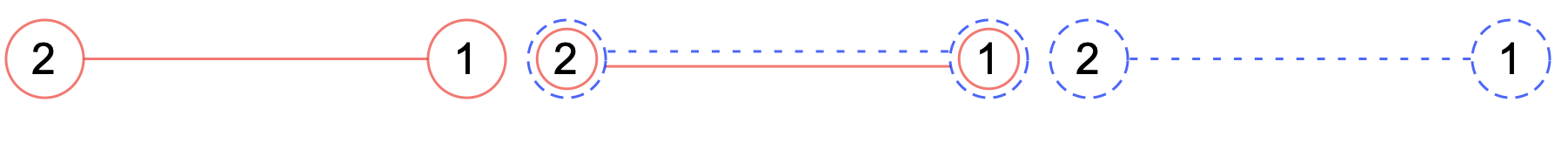} 
	\caption{On the left, vertices \textbf{1,2} are $z$-circled, and a $z$-edge is between them; In the middle, vertices \textbf{1,2} are $z$- and $w$-circled, and a $zw$-edge is between them; On the right,  vertices \textbf{1,2} are $w$-circled, and a $w$-edge is between them; 
	}
	\label{fig:edges}
\end{figure}

The following concepts were introduced to characterize some features of singular sequences.    In the $z$-diagram,   vertices  $\textbf{k}$ and $\textbf{l}$
are  called \emph{$z$-close},  if $z_{kl} \prec\epsilon^{-2}$; 
a $z$-stroke between  vertices   $\textbf{k}$ and  $\textbf{l}$ is  called  	a   \emph{maximal $z$-stroke} if $z_{kl} \approx  \epsilon^{-2}$; 
a subset of vertices are called \emph{an isolated component of the $z$-diagram} if there is no $z$-stroke  between a vertex of this subset and  a vertex of its complement. 
These concepts also apply to the $w$-diagram.

\begin{proposition}[Estimate]\label{Estimate1}\cite{yu2021Finiteness}
	For any $(k,l)$, $1\leq k<l\leq N$, we have $\epsilon^2\preceq z_{kl}\preceq \epsilon^{-2}$, $\epsilon^2\preceq w_{kl}\preceq \epsilon^{-2}$ and $\epsilon^2\preceq r_{kl}\preceq \epsilon^{-2}$.
	
	There is a $z$-stroke between $\textbf{k}$ and $\textbf{l}$ if and only if $w_{kl}\approx \epsilon^{2}$, then $ r_{kl}\preceq 1$.
	
	There is a maximal $z$-stroke between $\textbf{k}$ and $\textbf{l}$ if and only if $z_{kl}\approx \epsilon^{-2}, w_{kl}\approx \epsilon^{2}$, then $ r_{kl}\approx1$.
	
	There is a $z$-edge between $\textbf{k}$ and $\textbf{l}$ if and only if $z_{kl}\succ \epsilon^{2},w_{kl}\approx \epsilon^{2}$, then $\epsilon^{2}\prec r_{kl}\preceq 1 $.
	
	There is a maximal $z$-edge between $\textbf{k}$ and $\textbf{l}$ if and only if $z_{kl}\approx \epsilon^{-2},w_{kl}\approx \epsilon^{2}$, then $ r_{kl}\approx 1$.
	
	There is a $zw$-edge between $\textbf{k}$ and $\textbf{l}$ if and only if $z_{kl},w_{kl}\approx \epsilon^{2}$, this can be  characterized as $ r_{kl}\approx \epsilon^{2}$.
\end{proposition}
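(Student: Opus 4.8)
The plan is to reduce every assertion to the two defining relations of the extended system~\eqref{equ:complexcc}, namely $Z_{kl}=1/w_{kl}$ and $W_{kl}=1/z_{kl}$, together with $r_{kl}^2=z_{kl}w_{kl}$ and the normalization $\|\mathcal Z\|=\|\mathcal W\|=\epsilon^{-2}$. Almost all of the equivalences will then fall out of unwinding the definitions of strokes and edges through these reciprocal relations.

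First I would establish the a priori two-sided bounds of the opening line. Since every coordinate of $\mathcal Z$ has modulus at most $\|\mathcal Z\|=\epsilon^{-2}$, each $z_n\preceq\epsilon^{-2}$, so $z_{kl}=z_l-z_k\preceq\epsilon^{-2}$, and symmetrically $w_{kl}\preceq\epsilon^{-2}$. For the lower bounds I would use that $W_{kl}=1/z_{kl}$ is itself a coordinate of $\mathcal W$, whence $|W_{kl}|\le\|\mathcal W\|=\epsilon^{-2}$ forces $z_{kl}\succeq\epsilon^{2}$; symmetrically $w_{kl}\succeq\epsilon^{2}$ from $Z_{kl}=1/w_{kl}$. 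Feeding these into $r_{kl}^2=z_{kl}w_{kl}$ gives $\epsilon^{2}\preceq r_{kl}\preceq\epsilon^{-2}$.

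Next I would translate each diagram notion into an estimate on $z_{kl}$ and $w_{kl}$. By definition a $z$-stroke between $\mathbf k$ and $\mathbf l$ means $Z_{kl}\approx\epsilon^{-2}$, which through $Z_{kl}=1/w_{kl}$ is exactly $w_{kl}\approx\epsilon^{2}$; symmetrically a $w$-stroke is $z_{kl}\approx\epsilon^{2}$. A maximal $z$-stroke merely appends $z_{kl}\approx\epsilon^{-2}$. A $z$-edge is a $z$-stroke with no accompanying $w$-stroke, i.e.\ $w_{kl}\approx\epsilon^{2}$ together with $z_{kl}\not\approx\epsilon^{2}$; combined with the already-proved bound $z_{kl}\succeq\epsilon^{2}$, the latter sharpens to $z_{kl}\succ\epsilon^{2}$. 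A $zw$-edge carries both strokes, hence $z_{kl}\approx\epsilon^{2}$ and $w_{kl}\approx\epsilon^{2}$. Because each reciprocal relation is an exact bijection between the corresponding order estimates, every one of these translations is an honest ``if and only if''. The stated estimate for $r_{kl}$ is then read off from $r_{kl}^2=z_{kl}w_{kl}$ and the a priori bounds: a $z$-stroke gives $r_{kl}^2\approx\epsilon^{2}z_{kl}\preceq1$; a maximal $z$-stroke or maximal $z$-edge gives $r_{kl}^2\approx\epsilon^{-2}\epsilon^{2}=1$; a $z$-edge gives $\epsilon^{4}\prec r_{kl}^2\preceq1$; and a $zw$-edge gives $r_{kl}^2\approx\epsilon^{4}$.

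The only step that is not pure bookkeeping is the converse in the last item, namely that $r_{kl}\approx\epsilon^{2}$ by itself already characterizes a $zw$-edge. Here I would argue that $r_{kl}^2=z_{kl}w_{kl}\approx\epsilon^{4}$ together with the individual lower bounds $z_{kl}\succeq\epsilon^{2}$ and $w_{kl}\succeq\epsilon^{2}$ forces each factor to be $\approx\epsilon^{2}$: writing the product as $(z_{kl}/\epsilon^{2})(w_{kl}/\epsilon^{2})$, each factor is bounded below by a positive constant while the product is bounded, so each factor is also bounded above. This sandwiching of the two factors from a single bound on their product is the main (if mild) obstacle; everything else follows by unwinding the definitions through the reciprocal relations.
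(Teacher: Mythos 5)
Your proposal is correct and takes essentially the same route as the source: the paper states Proposition \ref{Estimate1} without proof, importing it from \cite{yu2021Finiteness}, where the estimates are likewise obtained by unwinding the definitions of circles, strokes and edges through the reciprocal relations $Z_{kl}=1/w_{kl}$, $W_{kl}=1/z_{kl}$ of the extended system \eqref{equ:complexcc} and the normalization $\|\mathcal{Z}\|=\|\mathcal{W}\|=\epsilon^{-2}$, including the same product-sandwich argument for the converse in the $zw$-edge characterization. The only slightly loose point is your sharpening of ``not $z_{kl}\approx\epsilon^{2}$'' to ``$z_{kl}\succ\epsilon^{2}$'' in the $z$-edge item: this rests not merely on the a priori lower bound but on the fact that $\epsilon^{2}/z_{kl}=\epsilon^{2}W_{kl}$ converges along a singular sequence by the extraction built into its definition, which rules out a subsequence-dependent oscillation between bounded and unbounded behavior.
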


\begin{remark}
	By the estimates above, the strokes in a $zw$-edge are not maximal. A maximal $z$-stroke is exactly a maximal $z$-edge.
\end{remark}

The following rules for the two-colored diagrams  are valid if  ``$z$'' and ``$w$''  were switched.

\begin{description}
	\item[{Rule I}]
	There is something at each end of any $z$-stroke: another $z$-stroke
	or/and a $z$-circle drawn around the name of the vertex. A $z$-circle cannot be isolated; there must be a $z$-stroke emanating from it. There is at least one
	$z$-stroke in the $z$-diagram.
	\item[{Rule II}] If vertices $\textbf{k}$ and $\textbf{l}$
	are  $z$-close, they are both $z$-circled or both not
	$z$-circled.
	\item[{Rule III}]  The moment of vorticity of a set of vertices forming an isolated component of the $z$-diagram is $z$-close to the origin.
	\item[{Rule IV}]  Consider the $z$-diagram or an isolated component of it. If there
	is a $z$-circled vertex, there is another one.   If the $z$-circled vertices are all
	$z$-close together,  the total vorticity of these $z$-circled  vertices is zero.
	\item[{Rule V}]  There is at least one $z$-circle at certain end of any maximal $z$-stroke. As a result,
	if an isolated component of the $z$-diagram has no $z$-circled vertex,
	then it has no maximal $z$-stroke.
	\item[{Rule VI}]
	If there are two consecutive $z$-stroke, there is a third $z$-stroke closing the triangle.
\end{description}




\section{Constraints  when some sub-diagrams appear} \label{sec:moreproperty}

\indent\par
We collect some useful results in this section. 
	Recall that  $\Gamma_J, \Gamma_{j_1, ..., j_n}$, $L_J$ and $L_{j_1, ..., j_n}$  are defined in  Definition \ref{def:LI}.

\begin{proposition}\label{Prp:sumT12} \cite{yu2021Finiteness}
	Suppose that a diagram has two $z$-circled vertices (say $\textbf{1}$ and $\textbf{2}$) which are also $z$-close,   if none of all the other vertices is $z$-close with them,  then $\Gamma_1+\Gamma_2\neq 0$ and $\overline{\Lambda}z_{12}w_{12}\sim \frac{1}{\Gamma_1+\Gamma_2}$. In particular, vertices $\textbf{1}$ and $\textbf{2}$ cannot form a $z$-stroke.
\end{proposition}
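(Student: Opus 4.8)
The plan is to read off the asymptotics of $\overline{\Lambda}\,z_{12}w_{12}$ directly from the defining relations \eqref{equ:complexcc} by subtracting the two equations attached to the vertices $\textbf{1}$ and $\textbf{2}$. First I translate the hypotheses through Proposition \ref{Estimate1}: being $z$-circled and $z$-close means $z_1,z_2\approx\epsilon^{-2}$ and $z_{12}\prec\epsilon^{-2}$, while the assumption that no other vertex is $z$-close to them forces $z_{j1},z_{j2}\approx\epsilon^{-2}$ for every $j\geq 3$ (one always has $z_{1j}\preceq\epsilon^{-2}$, and the only way to fail $z$-closeness is $z_{1j}\approx\epsilon^{-2}$). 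Since the $w$-equations $\overline{\Lambda}w_n=\sum_{j\neq n}\Gamma_j W_{jn}$ carry $W_{jn}=1/z_{jn}$, they are the equations whose denominators are pinned down by these hypotheses, so I would subtract the $w$-equations rather than the $z$-equations.

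Subtracting the equation at $\textbf{1}$ from that at $\textbf{2}$, using $W_{21}=-W_{12}$ and $W_{j1}-W_{j2}=z_{12}/(z_{j1}z_{j2})$, gives $\overline{\Lambda}w_{12}=(\Gamma_1+\Gamma_2)W_{12}-\sum_{j\geq 3}\Gamma_j z_{12}/(z_{j1}z_{j2})$. Multiplying by $z_{12}$ and invoking the constraint $W_{12}z_{12}=1$ from \eqref{equ:complexcc} produces the identity
\[
\overline{\Lambda}\,z_{12}w_{12}=(\Gamma_1+\Gamma_2)-\sum_{j\geq 3}\Gamma_j\frac{z_{12}^2}{z_{j1}z_{j2}}.
\]
The estimate now finishes this half: each summand has modulus $\prec 1$, because $z_{12}\prec\epsilon^{-2}$ makes $z_{12}^2\prec\epsilon^{-4}$ while $z_{j1}z_{j2}\approx\epsilon^{-4}$, and as the sum is finite it tends to $0$. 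Hence $\overline{\Lambda}z_{12}w_{12}$ converges to a limit governed by $\Gamma_1+\Gamma_2$, which yields the asserted asymptotic and shows that $r_{12}^2=z_{12}w_{12}$ has a definite nonzero limit as soon as $\Gamma_1+\Gamma_2\neq 0$.

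It then remains to secure $\Gamma_1+\Gamma_2\neq 0$ and to deduce the final clause. The ``no $z$-stroke'' assertion is immediate once $r_{12}^2\approx 1$: a $z$-stroke between $\textbf{1}$ and $\textbf{2}$ would mean $w_{12}\approx\epsilon^2$ by Proposition \ref{Estimate1}, and then $z_{12}=r_{12}^2/w_{12}\approx\epsilon^{-2}$, contradicting $z$-closeness. The delicate point, which I expect to be the main obstacle, is excluding $\Gamma_1+\Gamma_2=0$: in that case the displayed identity degenerates to $\overline{\Lambda}z_{12}w_{12}=-\sum_{j\geq 3}\Gamma_j z_{12}^2/(z_{j1}z_{j2})\to 0$, i.e.\ $r_{12}\to 0$, so the two $z$-circled vertices would collapse onto one another while sitting at order $\epsilon^{-2}$ and staying separated in order from all the rest. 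I would rule this out by showing that such a near-collision of a $z$-circled, $z$-isolated pair is incompatible with the diagram rules, the pair behaving at leading order like an isolated two-vortex subsystem whose circling at scale $\epsilon^{-2}$ keeps $\overline{\Lambda}r_{12}^2$ bounded away from $0$. Apart from this, the whole argument hinges on the isolation hypothesis: if some third vertex were $z$-close to $\textbf{1},\textbf{2}$, the corresponding summand would be $\approx 1$ rather than $o(1)$, and the identity would no longer localize $\overline{\Lambda}z_{12}w_{12}$ at $\Gamma_1+\Gamma_2$.
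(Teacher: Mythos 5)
Your main computation is correct and is exactly the intended technique: the paper states this proposition without proof (citing \cite{yu2021Finiteness}), but its proofs of the sibling results (Corollary \ref{Cor:sumT12}, Propositions \ref{Prp:isolate-z_12}, \ref{Prp:isolate-z_123}) all proceed by the same device of subtracting the two $w$-equations and using $W_{12}z_{12}=1$. Your identity $\overline{\Lambda}z_{12}w_{12}=(\Gamma_1+\Gamma_2)-z_{12}^2\sum_{j\geq 3}\Gamma_j/(z_{j1}z_{j2})$ is right, and so is the estimate killing the sum. Note, however, that your identity gives the limit $\Gamma_1+\Gamma_2$, not the stated $\frac{1}{\Gamma_1+\Gamma_2}$, and your phrase ``a limit governed by $\Gamma_1+\Gamma_2$, which yields the asserted asymptotic'' blurs this over. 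You should have flagged it: the two agree only when $(\Gamma_1+\Gamma_2)^2=1$. In fact your constant is the correct one (the two-vortex relative equilibrium satisfies $\lambda r_{12}^2=\Gamma_1+\Gamma_2$, and with $|\Lambda|=1$ your limit is equivalent to $\Lambda Z_{12}W_{12}\sim\frac{1}{\Gamma_1+\Gamma_2}$), so the statement as transcribed here appears to carry a reciprocal misprint; the paper only ever uses the conclusions $\Gamma_1+\Gamma_2\neq 0$ and ``no $z$-stroke,'' which are unaffected.

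The genuine gap is the clause you yourself identify as ``the delicate point'' and then do not prove: excluding $\Gamma_1+\Gamma_2=0$. Your heuristic (``the pair behaving at leading order like an isolated two-vortex subsystem whose circling at scale $\epsilon^{-2}$ keeps $\overline{\Lambda}r_{12}^2$ bounded away from $0$'') is not an argument, and merely observing $r_{12}\to 0$ is no contradiction at all --- $zw$-edges have $r_{kl}\approx\epsilon^2$ legitimately. The missing step is a one-liner already contained in your own identity: if $\Gamma_1+\Gamma_2=0$, then $\overline{\Lambda}w_{12}=-z_{12}\sum_{j\geq 3}\Gamma_j/(z_{j1}z_{j2})$, and since $1/(z_{j1}z_{j2})\approx\epsilon^{4}$ while $z_{12}\prec\epsilon^{-2}$, this forces $w_{12}\prec\epsilon^{2}$, contradicting the universal lower bound $\epsilon^{2}\preceq w_{12}$ of Proposition \ref{Estimate1}. (This also shows the $z$-circle hypothesis is not what saves you; the rescue comes from the a priori estimate on $w_{12}$.) With that line inserted, $\Gamma_1+\Gamma_2\neq 0$ follows, your asymptotic holds with the constant $\Gamma_1+\Gamma_2$, and your deduction of the ``no $z$-stroke'' clause ($w_{12}\approx\epsilon^{2}$ would force $z_{12}\approx\epsilon^{-2}$, contradicting $z$-closeness) is sound.
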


\begin{corollary}\label{Cor:sumT12} 
	Suppose that a diagram has  two $z$-circled vertices (say $\textbf{1}$ and $\textbf{2}$) which also form a $z$-stroke. If  none of all the other vertices is $z$-close with them, then $z_{12}\approx \epsilon^{-2}$,  $\Gamma_1+\Gamma_2\neq 0$, and $w_1, w_2\preceq \epsilon ^2$.  
\end{corollary}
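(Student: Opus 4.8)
The plan is to extract from the extended system \eqref{equ:complexcc} two exact ``difference identities'' for the pair $\{\textbf{1},\textbf{2}\}$ and then read off the three assertions from the orders of magnitude recorded in Proposition \ref{Estimate1}. Subtracting the $z$-equations for $\textbf{1}$ and $\textbf{2}$ and using $Z_{21}=-Z_{12}$ together with $Z_{j2}-Z_{j1}=-w_{12}/(w_{j1}w_{j2})$ gives
\begin{equation*}
\Lambda z_{12}=(\Gamma_1+\Gamma_2)Z_{12}-w_{12}\sum_{j\ge 3}\frac{\Gamma_j}{w_{j1}w_{j2}},
\end{equation*}
and, since $Z_{12}w_{12}=1$, multiplying by $w_{12}$ yields $\Lambda z_{12}w_{12}=(\Gamma_1+\Gamma_2)-w_{12}^{2}\sum_{j\ge 3}\Gamma_j/(w_{j1}w_{j2})$. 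The first assertion $z_{12}\approx\epsilon^{-2}$ is then immediate from Proposition \ref{Prp:sumT12} by contraposition: since $\textbf{1},\textbf{2}$ are $z$-circled, form a $z$-stroke, and have no other vertex $z$-close to them, they cannot themselves be $z$-close, for otherwise the Proposition would forbid the very $z$-stroke we are given. As $z_{12}\preceq\epsilon^{-2}$ always holds, $z_{12}\not\prec\epsilon^{-2}$ forces $z_{12}\approx\epsilon^{-2}$.

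For the third assertion I would use the hypothesis to fix the relevant orders: ``no other vertex is $z$-close'' means $z_{j1}\approx z_{j2}\approx\epsilon^{-2}$ for every $j\ge 3$, and we have just shown $z_{12}\approx\epsilon^{-2}$. Hence in the $w$-equation
\begin{equation*}
\overline{\Lambda}\,w_1=-\frac{\Gamma_2}{z_{12}}+\sum_{j\ge 3}\frac{\Gamma_j}{z_{j1}}
\end{equation*}
every summand is $\preceq\epsilon^{2}$, and since $|\overline{\Lambda}|=1$ we get $w_1\preceq\epsilon^{2}$; the same computation at $\textbf{2}$ gives $w_2\preceq\epsilon^{2}$. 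Note this step genuinely needs the first assertion, to keep the $\Gamma_2/z_{12}$ term from being larger than $\epsilon^2$.

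The hard part is $\Gamma_1+\Gamma_2\neq 0$, and here the situation differs essentially from the $z$-close case of Proposition \ref{Prp:sumT12}. In that case $z_{12}\prec\epsilon^{-2}$, so the correction term is negligible and $\overline{\Lambda}z_{12}w_{12}\to(\Gamma_1+\Gamma_2)$, which is nonzero because $z_{12}w_{12}\not\to0$. Here the $z$-stroke gives $r_{12}\approx 1$, i.e.\ $z_{12}w_{12}\approx1$, but the correction term is now of order one, so this clean limiting argument collapses and a pair-local analysis of $\{\textbf{1},\textbf{2}\}$ alone is consistent with $\Gamma_1+\Gamma_2=0$. I would therefore argue by contradiction: assuming $\Gamma_1+\Gamma_2=0$, the identity above together with $z_{12}w_{12}\approx1$ gives $w_{12}^{2}\sum_{j\ge 3}\Gamma_j/(w_{j1}w_{j2})\approx 1$. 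Because $w_{12}\approx\epsilon^{2}$ while each factor satisfies $w_{j1},w_{j2}\succeq\epsilon^{2}$ (Proposition \ref{Estimate1}), the sum is $\preceq\epsilon^{-4}$ and is forced to be $\approx\epsilon^{-4}$; hence some $j_0\ge 3$ has $w_{j_01}\approx w_{j_02}\approx\epsilon^{2}$. Combined with $z_{j_01}\approx z_{j_02}\approx\epsilon^{-2}$, this means $\textbf{j}_0$ carries a \emph{maximal} $z$-stroke to both $\textbf{1}$ and $\textbf{2}$, producing a triangle of maximal $z$-strokes on the three $z$-circled vertices $\textbf{1},\textbf{2},\textbf{j}_0$.

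The remaining, and main, obstacle is to exclude this forced sub-diagram. The combinatorial rules do not suffice—Rules I, V, VI are all satisfied by such a triangle, and Rules II--IV do not apply because the three vertices are not $z$-close—so the contradiction must come from the analytic content of the equations. I expect the cleanest route is an iteration: the pair $\{\textbf{1},\textbf{j}_0\}$ is again two $z$-circled vertices joined by a maximal $z$-stroke, so if $\Gamma_1+\Gamma_{j_0}=0$ the same mechanism forces a further doubly-stroked vertex, and since the vortices are finite in number the process must terminate in a violation of either the ``no other $z$-close vertex'' hypothesis or the lower bound $w_{kl}\succeq\epsilon^{2}$ of Proposition \ref{Estimate1}. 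The delicate point in this route is that applying the argument to $\{\textbf{1},\textbf{j}_0\}$ requires controlling $z$-closeness to $\textbf{j}_0$, which is not part of the original hypothesis; making the exclusion precise—equivalently, showing directly that $\Gamma_1+\Gamma_2=0$ is incompatible with $r_{12}\approx1$ and $w_1,w_2\preceq\epsilon^{2}$—is where the real work lies.
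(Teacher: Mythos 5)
Your first and third assertions are correct, and your treatment of the third is in fact slightly more direct than the paper's: you read $w_1,w_2\preceq\epsilon^2$ off the single $w$-equation at each vertex term by term (using $z_{12}\approx\epsilon^{-2}$ and $z_{j1}\approx\epsilon^{-2}$ for $j\ge3$), whereas the paper sums the $w$-equations over $j\ge 3$, uses the identity $\sum_j\Gamma_j w_j=0$, and then isolates $\Gamma_1w_1+\Gamma_2w_2=(\Gamma_1+\Gamma_2)w_1+\Gamma_2w_{21}$ together with $w_{21}\approx\epsilon^2$. Both are valid. The genuine gap is exactly where you flag it: $\Gamma_1+\Gamma_2\neq 0$ is left unproved, and the missing idea is Rule III. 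You dismiss the combinatorial rules with the claim that ``Rules II--IV do not apply because the three vertices are not $z$-close,'' but this misreads Rule III: it does not require the vertices to be mutually $z$-close; it asserts that the \emph{moment of vorticity} of a set of vertices forming an isolated component of the $z$-diagram is $z$-close to the origin, i.e. $\Gamma_1z_1+\Gamma_2z_2\prec\epsilon^{-2}$ for the pair $\{\textbf{1},\textbf{2}\}$ (the corollary is applied precisely when this pair is an isolated component of the $z$-diagram, cf.\ Proposition \ref{Prp:isolate-z_12}). With this global constraint, the ``hard part'' is two lines, and it is exactly what the paper does: writing $\Gamma_1z_1+\Gamma_2z_2=(\Gamma_1+\Gamma_2)z_1+\Gamma_2z_{12}$, the assumption $\Gamma_1+\Gamma_2=0$ would force $\Gamma_2z_{12}\prec\epsilon^{-2}$, contradicting the maximality $z_{12}\approx\epsilon^{-2}$ that you had already established. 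No forced triangle, no iteration over pairs, no further analytic work is needed.

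It is worth noting that your own pair-difference identity nearly closes the argument once isolation is used: the scenario it forces under $\Gamma_1+\Gamma_2=0$, namely $w_{j_01}\approx w_{j_02}\approx\epsilon^2$ for some $j_0\ge3$, is by Proposition \ref{Estimate1} a pair of $z$-strokes from vertex $\textbf{j}_0$ to $\textbf{1}$ and $\textbf{2}$, which directly contradicts the isolation of the stroke between $\textbf{1}$ and $\textbf{2}$ in the $z$-diagram. You are right that under the literal hypothesis (only $z$-closeness of other vertices excluded) such maximal strokes cannot be ruled out pair-locally, which is why your proposed iteration stalls; but the intended reading, and the one under which the paper invokes Rule III, supplies the isolation. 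So the fix is not more analysis of the cluster $\{\textbf{1},\textbf{2},\textbf{j}_0\}$, but invoking the vanishing of the moment of vorticity on the isolated component, which is precisely Rule III.
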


\begin{proof}
	If they are $z$-close, by Proposition \ref{Prp:sumT12}, they cannot form a $z$-stroke, which is a contradiction. Note that    Rule III implies  
	\[\epsilon^{-2}   \succ  \Gamma_1 z_1 +\Gamma_2 z_2 = (\Gamma_1+\Gamma_2) z_1 +\Gamma_2 (z_2-z_1).   \]
	We obtain $\Gamma_1+\Gamma_2\neq 0.$
	
	Note that $z_{1j}, z_{2j}\approx  \epsilon^{-2}, j\ge3$. Then 
	\[\bar{\Lambda}\sum_{j\ge 3} \Gamma_j w_j=\sum_{j\ge 3}\frac{\Gamma_1\Gamma_j}{z_{1j}}+\sum_{j\ge3}\frac{\Gamma_2\Gamma_j}{z_{2j}}\preceq \epsilon^{2}.\]
	By the equation $\sum_{j}\Gamma_j w_j=0$, we have 
	\[ \epsilon^{2}\succeq \Gamma_1 w_1+\Gamma_2w_2=(\Gamma_1+\Gamma_2)w_1+ \Gamma_2w_{21}.  \]
	Since $w_{21}\approx \epsilon^2$, we have $w_1, w_2\preceq \epsilon ^2$.  
\end{proof}

\begin{proposition}\label{Prp:LI} \cite{yu2021Finiteness}
	Suppose that  a fully $z$-stroked sub-diagram with  vertices $\{1,..., k\}, (k\ge 3)$ exists in isolation  in a diagram, and none of its vertices  is $z$-circled, then
	\begin{equation}\notag
		L_{1...k}= \sum_{i,j \in \{1, ..., k\}, i\ne j}\Gamma_i\Gamma_j=0.
	\end{equation}
	
\end{proposition}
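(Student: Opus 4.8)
The plan is to convert the combinatorial hypotheses into asymptotic order relations, then write down an \emph{exact} algebraic identity for $L_{1\dots k}$ valid at every stage of the singular sequence, and finally show that the right-hand side of this identity tends to $0$. Since $L_{1\dots k}$ is a fixed number independent of $\epsilon$, this forces $L_{1\dots k}=0$.

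First I would record what the hypotheses mean through Proposition \ref{Estimate1}. Write $I=\{1,\dots,k\}$. Being \emph{fully $z$-stroked} means there is a $z$-stroke between every pair $i\ne j$ in $I$, hence $w_{ij}\approx\epsilon^{2}$ for all such pairs; in particular, anchoring at vertex $\textbf{1}$ and setting $u_n=w_n-w_1=w_{1n}$, we have $u_n\approx\epsilon^{2}$ for $n\in I$ (and $u_1=0$). Existing \emph{in isolation} means no $z$-stroke joins a vertex of $I$ to one of its complement, so $w_{jn}\succ\epsilon^{2}$ whenever $n\in I$ and $j\notin I$. Finally, \emph{no vertex of $I$ is $z$-circled} gives $z_n\prec\epsilon^{-2}$ for each $n\in I$. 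These three facts, together with the first line of \eqref{equ:complexcc} in the form $\Lambda z_n=\sum_{j\ne n}\Gamma_j/w_{jn}$, are all I will use.

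Next I would build the identity. I deliberately weight by $\Gamma_n u_n$ rather than $\Gamma_n w_n$, because the cluster $I$, though tightly grouped in the $w$-picture, may sit arbitrarily far from the origin (so $w_n$ itself can be as large as $\epsilon^{-2}$). Multiplying the $n$-th equation by $\Gamma_n u_n$ and summing over $n\in I$ gives
\[ \Lambda\sum_{n\in I}\Gamma_n u_n z_n \;=\; \sum_{\substack{n,j\in I\\ j\ne n}}\frac{\Gamma_n\Gamma_j\,u_n}{w_{jn}} \;+\; \sum_{n\in I}\sum_{j\notin I}\frac{\Gamma_n\Gamma_j\,u_n}{w_{jn}}. \]
In the first double sum I use $w_{jn}=u_n-u_j$ for $j,n\in I$ and symmetrize the ordered pairs $(n,j)$ and $(j,n)$: the bracket collapses, $\tfrac{u_n}{u_n-u_j}+\tfrac{u_j}{u_j-u_n}=1$, so the first sum equals $\sum_{\{n,j\}\subset I}\Gamma_n\Gamma_j=L_{1\dots k}$ \emph{exactly} (the choice of anchor affects neither $w_{jn}=u_n-u_j$ nor this symmetrized value). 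Rearranging,
\[ L_{1\dots k} \;=\; \Lambda\sum_{n\in I}\Gamma_n u_n z_n \;-\; \sum_{n\in I}\sum_{j\notin I}\frac{\Gamma_n\Gamma_j\,u_n}{w_{jn}}. \]
Now I estimate the right-hand side. In the first term $u_n\preceq\epsilon^{2}$ and $z_n\prec\epsilon^{-2}$, so $u_n z_n=(u_n/\epsilon^{2})(\epsilon^{2}z_n)\to0$; as $|\Lambda|=1$, the term tends to $0$. In the cross term $u_n\preceq\epsilon^{2}$ while $w_{jn}\succ\epsilon^{2}$ by isolation, so $u_n/w_{jn}\to0$; both sums being finite, the whole right-hand side tends to $0$. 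Since the left-hand side is the constant $L_{1\dots k}$, we conclude $L_{1\dots k}=0$.

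I expect the only genuinely delicate point to be the bookkeeping that forces the differences $u_n=w_n-w_1$ in place of the raw $w_n$: with the naive weighting $\Gamma_n w_n$ one is left with an uncontrolled term $\Lambda\sum_{n\in I}\Gamma_n w_n z_n$, which need not be small. Verifying that switching to $u_n$ does not disturb the symmetrized combinatorial sum (so that it still produces exactly $L_{1\dots k}$), while simultaneously rendering both residual terms manifestly $o(1)$, is the crux; recognizing that isolation is precisely the input needed to annihilate the cross term is the other key observation. The symmetrization producing $L_{1\dots k}$ and the order estimates are then routine consequences of Proposition \ref{Estimate1}.
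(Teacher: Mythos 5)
The paper does not reprove this proposition; it cites it from \cite{yu2021Finiteness}, where the argument is the Albouy--Kaloshin-style computation. Your proof is correct and is essentially that same argument: weighting the equations $\Lambda z_n=\sum_{j\ne n}\Gamma_j/w_{jn}$ by the anchored differences $u_n=w_{1n}$ (rather than the possibly large raw $w_n$), symmetrizing via $\frac{u_n}{u_n-u_j}+\frac{u_j}{u_j-u_n}=1$ to produce exactly $L_{1\dots k}$, and using the no-circle bound $z_n\prec\epsilon^{-2}$ together with the isolation bound $w_{jn}\succ\epsilon^{2}$ to make the remainder $o(1)$ -- so no further comparison is needed.
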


\begin{corollary}\label{Cor:L&Gamma} 
	Suppose a fully \(z\)-stroked sub-diagram with vertices \(K = \{1, \ldots, k\}\), \(k \ge 3\), exists in isolation in a diagram, and none of its vertices is \(z\)-circled. 
	\begin{enumerate}
		\item If there is an isolated component \(I\) of the \(w\)-diagram such that \(K \subset I\), then the \(w\)-circled vertices in \(I\) cannot be exactly \(\{1, \ldots, k\}\). 
		\item Consider  any subset of \(K\) with cardinality \((k-1)\) , say \(K_1 = \{2, \ldots, k\}\). If there is an isolated component \(I\) of the \(w\)-diagram with \(K_1 \subset I\), then the \(w\)-circled vertices in \(I\) cannot be exactly \(K_1\). 
		\item If there is a vertex outside of \(K\), say \(k+1\), such that \(\{k+1\} \cup K\) forms an isolated component of the \(w\)-diagram and these \(k+1\) vertices are fully \(w\)-stroked, then there is at least one \(w\)-circle among them.  
		\item If there are several isolated components \(\{I_j, j = 1, \ldots, s\}\) of the \(w\)-diagram with \(K \subset \cup_{j=1}^s I_j\), then the \(w\)-circled vertices in \(\cup_{j=1}^s I_j\) cannot be exactly \(\{1, \ldots, k\}\). 
	\end{enumerate}
\end{corollary}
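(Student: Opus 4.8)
The plan is to run all four parts off a single algebraic engine. First I would record two facts that hold under the standing hypotheses on $K=\{1,\dots,k\}$. Since $K$ is fully $z$-stroked, Proposition \ref{Estimate1} gives $w_{ij}\approx\epsilon^{2}$ for every pair $i,j\in K$; in particular all vertices of $K$ (and of any subset of $K$) are mutually $w$-close. Since $K$ is moreover isolated in the $z$-diagram with $k\ge 3$ and no $z$-circled vertex, Proposition \ref{Prp:LI} yields $L_{K}=0$. Writing $\Gamma_{K}^{2}=\sum_{i\in K}\Gamma_i^{2}+2L_{K}=\sum_{i\in K}\Gamma_i^{2}$ and using that the vorticities are nonzero reals, I get $\Gamma_{K}^{2}=\sum_{i\in K}\Gamma_i^{2}>0$, hence $\Gamma_K\neq 0$. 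This upgrade from $L_K=0$ to $\Gamma_K\neq 0$ is the key observation; every part will end by contradicting it (or a variant of it).

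For Part 1, suppose the $w$-circled vertices of the isolated $w$-component $I\supset K$ are exactly $K$. They are mutually $w$-close, so Rule IV (in the $w$-diagram) forces $\Gamma_K=0$, contradicting $\Gamma_K\neq 0$. For Part 2 I argue the same way with $K_1=\{2,\dots,k\}$: if the $w$-circled vertices of $I\supset K_1$ are exactly $K_1$, then Rule IV gives $\Gamma_{K_1}=0$ (here $|K_1|=k-1\ge 2$ supplies the second circled vertex Rule IV demands). Then from $L_K=0$ and the splitting $L_K=L_{K_1}+\Gamma_1\Gamma_{K_1}$ I conclude $L_{K_1}=0$, whence $\sum_{i\in K_1}\Gamma_i^{2}=\Gamma_{K_1}^{2}-2L_{K_1}=0$, again impossible for nonzero reals.

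For Part 3, suppose toward a contradiction that $\{k+1\}\cup K$ carries no $w$-circle. Being a fully $w$-stroked isolated $w$-component on $k+1\ge 3$ vertices with no $w$-circle, Proposition \ref{Prp:LI} (with the roles of $z$ and $w$ switched) gives $L_{\{k+1\}\cup K}=0$; subtracting $L_K=0$ and using $L_{\{k+1\}\cup K}=L_K+\Gamma_{k+1}\Gamma_K$ leaves $\Gamma_{k+1}\Gamma_K=0$, so $\Gamma_K=0$ since $\Gamma_{k+1}\neq0$, contradicting $\Gamma_K\neq0$. For Part 4, suppose the $w$-circled vertices in $\bigcup_{j=1}^{s}I_j$ are exactly $K$. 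Because the $I_j$ are disjoint, the $w$-circled vertices inside each $I_j$ are precisely $K\cap I_j$, and these are mutually $w$-close. Rule IV applied to each $I_j$ then forbids $|K\cap I_j|=1$ and forces $\Gamma_{K\cap I_j}=0$ whenever $K\cap I_j\neq\emptyset$; summing over $j$ and using that the $K\cap I_j$ partition $K$ gives $\Gamma_K=\sum_j\Gamma_{K\cap I_j}=0$, once more contradicting $\Gamma_K\neq0$.

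The only real obstacle is the passage from $L_K=0$ to $\Gamma_K\neq0$: it is exactly where the reality (and non-vanishing) of the vorticities enters, through $\Gamma_K^2=\sum_{i\in K}\Gamma_i^2>0$. If one allowed complex strengths this step would fail, so I would flag the standing assumption that the $\Gamma_i$ are nonzero real numbers. Everything else is bookkeeping: matching each hypothesis to the correct clause of Rule IV (including its ``no lone circled vertex'' clause, crucial in Part 4) or to the switched form of Proposition \ref{Prp:LI}, and tracking which partial vorticity sums are forced to vanish via the splitting identities for $L$.
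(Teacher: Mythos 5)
Your proof is correct and takes essentially the same route as the paper's: $L_K=0$ from Proposition \ref{Prp:LI} together with the $w$-closeness of $K$ from Proposition \ref{Estimate1}, then Rule IV and the identity $\Gamma_J^2=\sum_{i\in J}\Gamma_i^2+2L_J$ to force the contradictions, with the splitting $L_{\{k+1\}\cup K}=L_K+\Gamma_{k+1}\Gamma_K$ in part (3). The only differences are cosmetic: you front-load the observation $L_K=0\Rightarrow\Gamma_K\neq 0$, you spell out part (4) componentwise where the paper merely says it is similar to part (1), and in part (3) you correctly invoke the color-switched Proposition \ref{Prp:LI}, whereas the paper's text loosely attributes the vanishing of the $L$-quantities to Rule IV.
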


\begin{proof}
	First, we have \(L_{K} = 0\) by Proposition \ref{Prp:LI}, and the vertices of \(K\) are all \(w\)-close by  the estimate of Proposition \ref{Estimate1}.
	
	For part (1), if the \(w\)-circled vertices in \(I\) are exactly \(\{1, \ldots, k\}\), then by Rule IV, we have \(\sum_{i \in K} \Gamma_i = 0\). This leads to a contradiction because:
	\[
	\left(\sum_{i \in K} \Gamma_i\right)^2 = \sum_{i \in K} \Gamma_i^2 + 2L_{K}.
	\]
	The proof of part (4) is similar.
	
	For part (2), if the \(w\)-circled vertices in \(I\) are exactly \(K_1 = \{2, \ldots, k\}\), then by Rule IV, we have \(\sum_{i=2}^k \Gamma_i = 0\). Therefore:
	\[
	L_{K_1} = L_{K} - \Gamma_{1}\left(\sum_{i=2}^k \Gamma_i\right) = 0,
	\]
	which again leads to a contradiction since \(\sum_{i \in K_1} \Gamma_i = 0\).
	
	For part (3), if the component \(\{k+1\} \cup K\) is fully \(w\)-stroked but has no \(w\)-circle, then Rule IV implies that \(L_K = 0\) and \(L_K + \Gamma_{k+1} \sum_{i \in K} \Gamma_i = 0\). This leads to \(\sum_{i \in K} \Gamma_i = 0\), which is a contradiction.
\end{proof}

\begin{proposition}\label{Prp:isolate-z_12}
	Suppose that a diagram has an isolated \(z\)-stroke in the \(z\)-diagram, and its two ends are \(z\)-circled. Let \(\textbf{1}\) and \(\textbf{2}\) be the ends of this \(z\)-stroke. Suppose there is no other \(z\)-circle in the diagram. Then \(\Gamma_1 + \Gamma_2 \ne 0\), and \(z_{12}\) is maximal. The diagram forces \(\Lambda = \pm 1\) or \(\pm \textbf{i}\).
	Furthermore,
	\begin{itemize}
		\item If $\Lambda = \pm 1$, we have $\sum_{j=3}^N \Gamma_j=0$;
		\item 	If $\Lambda = \pm \textbf{i}$, we have $L=0$ and $\Gamma_1\Gamma_2=L_{3...N}$.
	\end{itemize}
\end{proposition}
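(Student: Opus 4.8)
The plan is to combine the local structure near the maximal edge $\textbf{12}$ (from Corollary \ref{Cor:sumT12}) with leading–order expansions of \emph{both} the $z$- and the $w$-equations, and finally with a global scalar identity $\Lambda I=L$. First I would record the local data. Since $\textbf{1}$ is $z$-circled and there is no other $z$-circle, Rule II forbids any vertex $\textbf{j}$ ($j\ge 3$) from being $z$-close to $\textbf{1}$ or $\textbf{2}$, so the hypotheses of Corollary \ref{Cor:sumT12} hold and give $\Gamma_1+\Gamma_2\ne0$, $z_{12}\approx\epsilon^{-2}$ (hence $z_{12}$ is maximal), and $w_1,w_2\preceq\epsilon^2$. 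Moreover each $\textbf{j}$ with $j\ge3$ is not $z$-circled, so $z_j\prec\epsilon^{-2}$ and $z_{1j},z_{2j}\approx\epsilon^{-2}$; in particular $z_{j1}\sim z_1$ and $z_{j2}\sim z_2$. Because the $\textbf{12}$-stroke is isolated, $\textbf{1}$ and $\textbf{2}$ carry no other $z$-stroke, so $Z_{j1},Z_{j2}$ are non-maximal for $j\ge3$.

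Next I would extract two scalar relations pinning down $\Lambda$. Weighting the first family $\Lambda z_n=\sum_{j\ne n}\Gamma_j Z_{jn}$ by $\Gamma_n$ and summing, the antisymmetry $Z_{jn}=-Z_{nj}$ yields $\sum_n\Gamma_n z_n=0$; with $z_2=z_1+z_{12}$ this forces $z_1\sim-\frac{\Gamma_2}{\Gamma_1+\Gamma_2}z_{12}$ and $z_2\sim\frac{\Gamma_1}{\Gamma_1+\Gamma_2}z_{12}$. In the equation for $\Lambda z_1$ only $\Gamma_2 Z_{21}=-\Gamma_2/w_{12}$ is of order $\epsilon^{-2}$ (each $Z_{j1}$, $j\ge3$, being non-maximal), so $\Lambda z_1 w_{12}\to-\Gamma_2$, and substituting $z_1$ gives $\Lambda\,r_{12}^2\to\Gamma_1+\Gamma_2$, where $r_{12}^2=z_{12}w_{12}$. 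For the companion relation I expand $\Lambda^{-1}w_{12}=\Gamma_1W_{12}-\Gamma_2W_{21}+\sum_{j\ge3}\Gamma_j(W_{j2}-W_{j1})$; using $W_{12}=1/z_{12}$, $W_{j1}\sim1/z_1$, $W_{j2}\sim1/z_2$ and writing $\sigma=\sum_{j=3}^{N}\Gamma_j$, I obtain $\Lambda^{-1}r_{12}^2\to(\Gamma_1+\Gamma_2)\left(1+\frac{\sigma(\Gamma_1+\Gamma_2)}{\Gamma_1\Gamma_2}\right)$. After extracting a subsequence along which $\Lambda^{(n)}$ converges, dividing the two limits (legitimate since $r_{12}^2$ is bounded away from $0$) gives $\Lambda^{-2}\to 1+\frac{\sigma(\Gamma_1+\Gamma_2)}{\Gamma_1\Gamma_2}$. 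The right-hand side is real while $|\Lambda^{-2}|=1$, so $\Lambda^{-2}=\pm1$, i.e. $\Lambda\in\{\pm1,\pm\textbf{i}\}$.

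Finally I would read off the vorticity conditions. If $\Lambda=\pm1$ then $\Lambda^{-2}=1$, forcing $\sigma=0$, i.e. $\sum_{j=3}^{N}\Gamma_j=0$. If $\Lambda=\pm\textbf{i}$ then $\Lambda^{-2}=-1$, giving $\sigma(\Gamma_1+\Gamma_2)=-2\Gamma_1\Gamma_2$. To pin down $L$ I would use the global identity obtained by pairing the two families with $\Gamma_n w_n$ and $\Gamma_n z_n$ respectively and invoking $Z_{jk}w_{jk}=W_{jk}z_{jk}=1$: this yields $\Lambda I=L$ and $\Lambda^{-1}I=L$ with $I=\sum_n\Gamma_n z_n w_n$, whence $(\Lambda^2-1)I=0$. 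Since $\Lambda^2=-1$ here, $I=0$ and $L=0$; then decomposing $L=\Gamma_1\Gamma_2+(\Gamma_1+\Gamma_2)\sigma+L_{3\ldots N}$ and substituting $\sigma(\Gamma_1+\Gamma_2)=-2\Gamma_1\Gamma_2$ gives $\Gamma_1\Gamma_2=L_{3\ldots N}$.

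The hard part will be the $w$-side expansion. I must verify that every term $W_{j1},W_{j2}$ contributes at exactly the same order $\epsilon^2$, with limiting coefficients $1/z_1,1/z_2$, so that no hidden larger contribution spoils the limit of $\Lambda^{-1}w_{12}z_{12}$; this rests on the uniform estimates $z_{j1}\sim z_1\approx\epsilon^{-2}$ and $z_{j2}\sim z_2\approx\epsilon^{-2}$ established in the first paragraph, together with $w_1,w_2\preceq\epsilon^2$ from Corollary \ref{Cor:sumT12}. Once those asymptotics are secured, the reality-plus-modulus argument forcing $\Lambda^{-2}=\pm1$ and the subsequent case bookkeeping are routine.
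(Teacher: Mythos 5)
Your proposal is correct and takes essentially the same route as the paper: both rest on Corollary \ref{Cor:sumT12} for $\Gamma_1+\Gamma_2\ne 0$ and the maximality of $z_{12}$, then expand at leading order the $z$-equation at one end of the stroke together with the $w$-equation for $w_{12}$ (using $W_{j2}-W_{j1}=-z_{12}/(z_{j1}z_{j2})$, i.e.\ $\tfrac{1}{z_{j2}}-\tfrac{1}{z_{j1}}\sim\tfrac{1}{z_2}-\tfrac{1}{z_1}$), arriving at the identical relation $\overline{\Lambda}/\Lambda=1+\bigl(\sum_{j=3}^{N}\Gamma_j\bigr)\bigl(\tfrac{1}{\Gamma_1}+\tfrac{1}{\Gamma_2}\bigr)$ and the same case analysis. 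The only differences are presentational and in your favor: you derive $\sum_n\Gamma_n z_n=0$ directly from the antisymmetry of the extended system, handle a possibly varying $\Lambda$ by subsequence extraction, and make explicit the justification of $L=0$ in the case $\Lambda=\pm\textbf{i}$ via $\Lambda I=L=\Lambda^{-1}I$ with $I=\sum_n\Gamma_n z_n w_n$, a step the paper asserts without comment through \eqref{center0}.
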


\begin{proof}
	
	The facts  that	$\Gamma_1+\Gamma_2\neq 0$ and $z_{12}$ is maximal  follow from Corollary \ref{Cor:sumT12}. 
	Without loss of generality, assume $z_1\sim -\Gamma_2 a\epsilon^{-2}$ and $z_2\sim \Gamma_1 a \epsilon^{-2}$, then $$z_{12}\sim (\Gamma_1 +\Gamma_2) a \epsilon^{-2},\  \frac{1}{z_{2}}-\frac{1}{z_{1}}\sim (\frac{1}{\Gamma_1}+\frac{1}{\Gamma_2})\frac{\epsilon^2}{a}. $$
	
	The  System \eqref{equ:complexcc} yields
	\begin{equation}
		\begin{array}{c}
			\label{equ:iso-z12}\overline{\Lambda} w_{12}=
			(\Gamma_1+\Gamma_2)W_{12}+ \sum_{j=3}^N \Gamma_j (\frac{1}{z_{j2}}-\frac{1}{z_{j1}}) \cr 
			\Lambda z_{2} \sim \Gamma_1 Z_{12}.
		\end{array}{}
	\end{equation}
	
	The second equation of \eqref{equ:iso-z12} implies  $w_{12} \sim \frac{\epsilon^2}{ a\Lambda}$.  Note that
	$\frac{1}{z_{j2}}-\frac{1}{z_{j1}}\sim \frac{1}{z_{2}}-\frac{1}{z_{1}}$
	for all $j >2$ and that  $W_{12}=\frac{1}{z_{12}}$. The first equation of
	\eqref{equ:iso-z12} implies 
	\begin{equation}\label{equ:iso-z12-1}
		{\overline{\Lambda}}/{\Lambda}=1+ \sum_{j=3}^N \Gamma_j  (\frac{1}{\Gamma_1}+\frac{1}{\Gamma_2}).
	\end{equation}
	It follows that $\Lambda = \pm 1$ or $ \pm \textbf{i}$.
	
	If $\Lambda = \pm 1$, we have
	\[ 0= \sum_{j=3}^N \Gamma_j  (\frac{1}{\Gamma_1}+\frac{1}{\Gamma_2}),\  \Rightarrow\  \sum_{j=3}^N \Gamma_j =0. \]
	
	If $\Lambda = \pm \textbf{i}$, we obtain 
	\[ -2= \sum_{j=3}^N \Gamma_j  (\frac{1}{\Gamma_1}+\frac{1}{\Gamma_2}),\  L=0, \ \Rightarrow L=0, \ \Gamma_1 \Gamma_2=L_{3...N}.  \]
\end{proof}

Similarly, we have the following result.

\begin{proposition}\label{Prp:isolate-z_123}
	Suppose that a diagram has an isolated triangle of \(z\)-strokes in the \(z\)-diagram, where two of the vertices of the triangle are \(z\)-circled. Let \(\textbf{2}\) and \(\textbf{3}\) be the two \(z\)-circled vertices and \(\textbf{1}\) the other vertex. Suppose there is no other \(z\)-circle in the diagram. Then \(\Gamma_2 + \Gamma_3 \ne 0\), and \(z_{23}\) is maximal. The diagram forces \(\Lambda = \pm 1\) or \(\pm \textbf{i}\).  Furthermore,
	\begin{itemize}
		\item If $\Lambda = \pm 1$, we have $\sum_{j=4}^N \Gamma_j=0$;
		\item 	If $\Lambda = \pm \textbf{i}$, we have $L=0$ and $L_{123}=L_{4...N} + \Gamma_1( \sum _{j=4}^N \Gamma_j)$.
	\end{itemize}
\end{proposition}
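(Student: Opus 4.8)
The plan is to mirror the strategy of Proposition~\ref{Prp:isolate-z_12}, treating the $z$-circled pair $\{\textbf{2},\textbf{3}\}$ as the analogue of the circled maximal stroke there, while carrying the extra uncircled vertex $\textbf{1}$ of the triangle through the computation. I would first record the geometry. Since $\textbf{2},\textbf{3}$ are $z$-circled and $\textbf{1}$ is not, Rule~II forbids $\textbf{1}$ from being $z$-close to either, and the absence of any other $z$-circle forbids every vertex $\textbf{j}$, $j\ge 4$, from being $z$-close to $\textbf{2}$ or $\textbf{3}$; hence $z_{2j},z_{3j}\approx\epsilon^{-2}$ for all $j\ne 2,3$. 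Applying Corollary~\ref{Cor:sumT12} to the pair $\{\textbf{2},\textbf{3}\}$ then gives $\Gamma_2+\Gamma_3\ne 0$, $z_{23}\approx\epsilon^{-2}$ and $w_2,w_3\preceq\epsilon^2$; together with $w_{12},w_{13}\approx\epsilon^2$ this forces $w_1\preceq\epsilon^2$. By Proposition~\ref{Estimate1} all three strokes are maximal, so $z_{12}\sim z_2$, $z_{13}\sim z_3$. Rule~III on the isolated component $\{\textbf{1},\textbf{2},\textbf{3}\}$ yields $\Gamma_2 z_2+\Gamma_3 z_3\prec\epsilon^{-2}$, and using the scaling symmetry of System~(\ref{equ:complexcc}) I normalize $z_2\sim-\Gamma_3 a\epsilon^{-2}$, $z_3\sim\Gamma_2 a\epsilon^{-2}$, whence $z_{23}\sim(\Gamma_2+\Gamma_3)a\epsilon^{-2}$.

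Next I would extract two families of leading-order relations. Writing $Z_{12}\sim\alpha\epsilon^{-2}$, $Z_{13}\sim\beta\epsilon^{-2}$, $Z_{23}\sim\gamma\epsilon^{-2}$, the $z$-equations at the three vertices give, at order $\epsilon^{-2}$, the cancellation $\Gamma_2\alpha+\Gamma_3\beta=0$ at vertex $\textbf{1}$ (its interior strokes to $\textbf{2},\textbf{3}$ being the only maximal ones) and the single relation $\gamma=\Lambda a+\tfrac{\Gamma_1}{\Gamma_3}\alpha$ from vertices $\textbf{2}$ and $\textbf{3}$. The new feature relative to Proposition~\ref{Prp:isolate-z_12} is exactly the term $\tfrac{\Gamma_1}{\Gamma_3}\alpha$: each circled vertex is now the endpoint of two maximal $z$-strokes, so the $z$-equation no longer pins $w_{23}$ by itself and the extra unknown $\alpha$ (the leading part of $w_{12}$) intrudes. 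On the $w$-side I would use only the difference equation for $w_{23}=w_3-w_2$; since every $z_j$ with $j\ge 4$ is of lower order than $\epsilon^{-2}$, the terms $W_{j3}-W_{j2}\sim\tfrac1{z_3}-\tfrac1{z_2}$ are controlled, giving $\overline{\Lambda}w_{23}\sim\tfrac{\epsilon^2}{a}\,C$ with $C=1+\bigl(\Gamma_1+\sum_{j\ge 4}\Gamma_j\bigr)\bigl(\tfrac1{\Gamma_2}+\tfrac1{\Gamma_3}\bigr)$; as $w_{23}\sim\gamma^{-1}\epsilon^2$ and $\overline{\Lambda}=\Lambda^{-1}$, this reads $\gamma=a/(\Lambda C)$.

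To close the system I would eliminate $\alpha$ through the purely geometric identity $w_{13}=w_{12}+w_{23}$, which at leading order is $\beta^{-1}=\alpha^{-1}+\gamma^{-1}$. Combined with $\Gamma_2\alpha+\Gamma_3\beta=0$ this forces $\alpha=-\tfrac{\Gamma_2+\Gamma_3}{\Gamma_2}\gamma$, and substituting into $\gamma=\Lambda a+\tfrac{\Gamma_1}{\Gamma_3}\alpha$ gives $\gamma=\Lambda a\,\Gamma_2\Gamma_3/L_{123}$. Equating with $\gamma=a/(\Lambda C)$ produces the key identity
\[
\Lambda^2\Bigl[\Gamma_2\Gamma_3+\bigl(\Gamma_1+\textstyle\sum_{j\ge 4}\Gamma_j\bigr)(\Gamma_2+\Gamma_3)\Bigr]=L_{123}.
\]
The bracket equals $\Gamma_2\Gamma_3\,C\ne 0$ (since $w_{23}\approx\epsilon^2$ forces $C\ne 0$), so $\Lambda^2$ is a nonzero real number; with $|\Lambda|=1$ this yields $\Lambda=\pm1$ or $\pm\textbf{i}$. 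For $\Lambda=\pm1$ the identity collapses to $(\sum_{j\ge 4}\Gamma_j)(\Gamma_2+\Gamma_3)=0$ after subtracting $L_{123}=\Gamma_1(\Gamma_2+\Gamma_3)+\Gamma_2\Gamma_3$, giving $\sum_{j=4}^N\Gamma_j=0$. For $\Lambda=\pm\textbf{i}$ I would invoke, as in Proposition~\ref{Prp:isolate-z_12}, that a collapse configuration has $L=0$; then the splitting $L=L_{123}+L_{4\ldots N}+(\Gamma_1+\Gamma_2+\Gamma_3)\sum_{j\ge4}\Gamma_j=0$ turns $\Lambda^2=-1$ into $L_{123}=L_{4\ldots N}+\Gamma_1\sum_{j=4}^N\Gamma_j$.

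The main obstacle is controlling the interior vertex $\textbf{1}$. Its own $w$-equation is delicate, because the quantities $W_{j1}=1/z_{j1}$ with $j\ge 4$ need not be of order $\epsilon^2$ when $\textbf{1}$ and $\textbf{j}$ are close in $z$; the plan sidesteps this by never using the $w_1$ equation and instead closing the leading-order linear system via the exact relation $w_{13}=w_{12}+w_{23}$, which removes $\alpha$ cleanly. The remaining care is to verify $C\ne0$ so that no degenerate branch arises, and to confirm that vertices $\textbf{2}$ and $\textbf{3}$ indeed yield one and the same relation for $\gamma$, so that the single identity $\Lambda^2\Gamma_2\Gamma_3 C=L_{123}$ governs all cases.
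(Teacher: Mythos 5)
Your proposal is correct and takes essentially the same route as the paper's proof: your relations $\Gamma_2\alpha+\Gamma_3\beta=0$ and $\gamma=\Lambda a+\frac{\Gamma_1}{\Gamma_3}\alpha$, together with the elimination via $w_{13}=w_{12}+w_{23}$, are exactly the paper's normalization $Z_{21}\sim-\Gamma_3 b\epsilon^{-2}$, $Z_{31}\sim\Gamma_2 b\epsilon^{-2}$ and its computation of $Z_{23}$ from $w_{23}=1/Z_{21}+1/Z_{13}$, and both arguments culminate in the identical key identity $\frac{\overline{\Lambda}}{\Lambda}L_{123}=\Gamma_2\Gamma_3+(\Gamma_2+\Gamma_3)\sum_{j\ne 2,3}\Gamma_j$. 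The endgame also matches (realness of $\Lambda^{2}$ with $|\Lambda|=1$, the check $C\ne 0$ implicit in the paper's division of the two $a/b$ relations, and invoking $L=0$ for $\Lambda=\pm\textbf{i}$ as in Proposition \ref{Prp:isolate-z_12}), so yours is the same proof in slightly different bookkeeping.
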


\begin{proof}
	The facts that 	$\Gamma_2+\Gamma_3\ne 0$ and $z_{23}$ is maximal  follow from Corollary \ref{Cor:sumT12}.  Note that
	\[  \Gamma_2 z_2 +\Gamma_3 z_3 \prec \epsilon^{-2}, \ \Lambda  z_1 \sim \Gamma_2 Z_{21} +\Gamma_3 Z_{31} \prec \epsilon^{-2}.  \]
	Without loss of generality, assume
	\[ z_2\sim -\Gamma_3 a\epsilon^{-2}, \ z_3\sim \Gamma_2 a \epsilon^{-2}, \ Z_{21} \sim -\Gamma_3 b\epsilon^{-2}, \ Z_{31}\sim \Gamma_2 b \epsilon^{-2}.  \]
	Then
	\begin{eqnarray*}
		z_{23}\sim (\Gamma_2 +\Gamma_3) a \epsilon^{-2},\  \frac{1}{z_{3}}-\frac{1}{z_{2}}\sim (\frac{1}{\Gamma_2}+\frac{1}{\Gamma_3})\frac{\epsilon^2}{a}, \cr
		Z_{23}=\frac{1}{w_{23}}= \frac{1}{1/Z_{21} +1/Z_{13}} \sim -b \frac{\Gamma_2\Gamma_3}{(\Gamma_2+\Gamma_3)}  \epsilon^{-2}.
	\end{eqnarray*}
	Then similar to the above case, we have
	\begin{eqnarray*}
		\overline{\Lambda} w_{23}\sim
		(\Gamma_2+\Gamma_3)W_{23}+ \sum_{j\ne 2, 3}^N \Gamma_j (\frac{1}{z_{3}}-\frac{1}{z_{2}}) \cr 
		\Lambda z_{23} \sim  (\Gamma_2+\Gamma_3) Z_{23} +\Gamma_1 (Z_{13}-Z_{12}).
	\end{eqnarray*}
	
	Short computation reduces the two equations to
	\begin{eqnarray*}
		-\overline{\Lambda} \frac{a}{b}= \frac{\Gamma_2\Gamma_3}{\Gamma_2+\Gamma_3} (1+\sum_{j\ne 2, 3} \Gamma_j \frac{\Gamma_2+\Gamma_3} {\Gamma_2\Gamma_3}), \cr
		-\Lambda \frac{a}{b}= \frac{L_{123}}{\Gamma_2+\Gamma_3}.
	\end{eqnarray*}
	Then we obtain
	\[   \frac{\overline{\Lambda}}{\Lambda}  L_{123}= \Gamma_2\Gamma_3+ (\Gamma_2+\Gamma_3) \sum_{j\ne 2, 3} \Gamma_j.   \]
	It follows that $\Lambda = \pm 1$ or $ \pm \textbf{i}$.
	
	If $\Lambda = \pm 1$, we have  $\sum_{j=4}^N \Gamma_j=0$.
	If $\Lambda = \pm \textbf{i}$, we have $L=0$ and
	$$L=0, \  -L_{123}= \Gamma_2\Gamma_3+ (\Gamma_2+\Gamma_3) \sum_{j\ne 2, 3} \Gamma_j,$$
	which is equivalent to
	$L_{123}=L_{4...N} + \Gamma_1( \sum _{j=4}^N \Gamma_j), \ L=0. $
\end{proof}

\begin{proposition}\label{Prp:triangle} 
	Assume there is a triangle with vertices \(\textbf{1}, \textbf{2}, \textbf{3}\) that is fully \(z\)- and \(w\)-stroked, and fully \(z\)- and \(w\)-circled. Moreover, assume that  the triangle is isolated in the \(z\)-diagram. Then there must exist  some \(k > 3\) such that \(z_{k1} \preceq 1\).
\end{proposition}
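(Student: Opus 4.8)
The plan is to argue by contradiction: suppose $z_{k1}$ is unbounded, i.e. $z_{k1}\succ 1$, for every $k>3$, and derive a contradiction from System \eqref{equ:complexcc}. First I would record the orders forced by the hypotheses. Since each side of the triangle carries both a $z$-stroke and a $w$-stroke, it is a $zw$-edge, so Proposition \ref{Estimate1} gives $z_{ij},w_{ij}\approx\epsilon^2$ for $i,j\in\{1,2,3\}$; since the three vertices are $z$- and $w$-circled, $z_i,w_i\approx\epsilon^{-2}$. In particular $z_1\sim z_2\sim z_3$ and $w_1\sim w_2\sim w_3$, so the cluster is a tight triple far from the origin. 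Because $\{1,2,3\}$ is an isolated, fully $z$-circled, mutually $z$-close component of the $z$-diagram, Rule IV yields $\Gamma_1+\Gamma_2+\Gamma_3=0$.

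Next I would extract the internal shape of the cluster from the $z$-equations $\Lambda z_n=\sum_{j\ne n}\Gamma_j Z_{jn}$, $Z_{jn}=1/w_{jn}$. Taking the difference of the equations for $n=1$ and $n=2$, the left side is $\Lambda z_{12}\approx\epsilon^2$, while on the right the external part $\sum_{j>3}\Gamma_j(Z_{j1}-Z_{j2})$ is $\prec\epsilon^{-2}$: indeed isolation in the $z$-diagram gives $w_{j1},w_{j2}\succ\epsilon^2$, whence $Z_{j1}-Z_{j2}=(w_{j2}-w_{j1})/(w_{j1}w_{j2})\prec\epsilon^{-2}$. Forcing the surviving order-$\epsilon^{-2}$ terms to cancel, and using $\Gamma_1+\Gamma_2=-\Gamma_3$, reduces (after clearing denominators) to the shape relation $(w_1-w_2)^2\sim(w_2-w_3)(w_3-w_1)$. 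Writing $a=w_1-w_2,\ b=w_2-w_3,\ c=w_3-w_1$, so that $a+b+c=0$ automatically, this reads $a^2\sim bc$; eliminating $b=-(a+c)$ gives $(c/a)^2+(c/a)+1\sim 0$, so the limiting ratio $c/a$ tends to a primitive cube root of unity — in particular, to a number that is \emph{not real}.

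I would then produce a competing, \emph{real} constraint on the same ratio, and this is where the contradiction hypothesis enters. Multiplying the equation $\overline{\Lambda}w_n=\sum_{j\ne n}\Gamma_j/z_{jn}$ by $\Gamma_n$ and summing over $n=1,2,3$, the internal double sum cancels by the antisymmetry $Z_{jk}=-Z_{kj}$, leaving only $\sum_{j>3}\Gamma_j\sum_{i\le3}\Gamma_i/z_{ji}$. Since $z_{j1}\succ 1$ and $\sum_{i\le3}\Gamma_i=0$, the leading $1/z_{j1}$ term drops and each inner sum is $O(\epsilon^2/z_{j1}^2)\prec\epsilon^2$; hence $\Gamma_1w_1+\Gamma_2w_2+\Gamma_3w_3\prec\epsilon^2$. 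Rewriting the left side as $\Gamma_2w_{12}+\Gamma_3w_{13}=-\Gamma_2 a+\Gamma_3 c$ via $\Gamma_1=-\Gamma_2-\Gamma_3$ and comparing leading coefficients forces $\Gamma_2 a\sim\Gamma_3 c$, i.e. $c/a\to\Gamma_2/\Gamma_3\in\mathbb{R}$. This contradicts the non-real value found above, and the contradiction finishes the proof.

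The main obstacle is the bookkeeping in the two asymptotic reductions: one must verify carefully that the external sums are genuinely subdominant in each case — using $w_{j1}\succ\epsilon^2$ (from $z$-isolation) in the shape computation, and $z_{j1}\succ 1$ (the contradiction hypothesis) in the weighted-moment computation — and that the order-$\epsilon^{-2}$ cancellations collapse to exactly the stated algebraic relations. The decisive point, easy to overlook, is that these two reductions pin the single ratio $c/a$ to be simultaneously a primitive cube root of unity and a real number, which is impossible precisely because the vorticities $\Gamma_i$ are real and nonzero.
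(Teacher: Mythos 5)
Your proof is correct, and it is built from exactly the same two ingredients as the paper's: (i) under the contradiction hypothesis $z_{k1}\succ 1$ for all $k>3$, the weighted moment $\sum_{j=1}^{3}\Gamma_j w_j$ is $\prec\epsilon^2$ because $\sum_{j\le 3}\Gamma_j=0$ kills the leading $1/z_{k1}$ terms in the external sum; and (ii) the $z$-equations for the isolated triangle. What you do differently is the order and packaging of the endgame. The paper first uses (i) to pin the side ratios, $w_{12}/\Gamma_3\sim w_{23}/\Gamma_1\sim w_{31}/\Gamma_2$, then substitutes into the three equations $\Lambda z_n\sim \sum_{j\ne n,\,j\le 3}\Gamma_j Z_{jn}$ and uses $z_1\sim z_2\sim z_3$ to obtain the chain $\frac{\Gamma_1}{\Gamma_2}-\frac{\Gamma_2}{\Gamma_1}=\frac{\Gamma_2}{\Gamma_3}-\frac{\Gamma_3}{\Gamma_2}=\frac{\Gamma_3}{\Gamma_1}-\frac{\Gamma_1}{\Gamma_3}$, which is incompatible with $\Gamma_1+\Gamma_2+\Gamma_3=0$. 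You instead take differences of the $z$-equations first, which yields $\Gamma_3\bigl(\frac1a+\frac1b+\frac1c\bigr)\prec\epsilon^{-2}$ and hence $1+r+r^2\to 0$ for $r=c/a$ --- notably, this constraint uses only the isolation hypothesis and $\Gamma_1+\Gamma_2+\Gamma_3=0$, not the contradiction hypothesis --- and only then invoke (i) to force $r\to\Gamma_2/\Gamma_3\in\mathbb{R}$, contradicting the non-real cube root of unity. These are the same mathematics: the paper's final identity chain, reduced modulo $\Gamma_1+\Gamma_2+\Gamma_3=0$, is equivalent to $\Gamma_i^2+\Gamma_i\Gamma_j+\Gamma_j^2=0$, i.e.\ precisely your cube-root-of-unity condition on a vorticity ratio. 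Your decomposition is arguably cleaner logically, since it isolates which conclusion needs the contradiction hypothesis, and the final clash (``non-real equals real'') is more transparent than the symmetric-function identity; the paper's version, by pinning $(w_{12},w_{23},w_{31})\propto(\Gamma_3,\Gamma_1,\Gamma_2)$, extracts slightly more geometric information along the way, which is in the spirit of its companion results (e.g.\ Proposition \ref{Prp:triangle2}). One trivial slip: in your weighted-moment step the cancellation of the internal double sum uses the antisymmetry $W_{jk}=-W_{kj}$, not $Z_{jk}=-Z_{kj}$; this does not affect the argument.
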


\begin{proof}
	By Proposition \ref{Estimate1} and Rule IV, we have  \[z_1\sim z_2\sim z_3, \ w_1\sim w_2\sim w_3, \ \Gamma_{1}+\Gamma_{2}+\Gamma_{3}=0.\]
	Suppose that it holds  $z_{k1}\succ 1$ for all $k>3$. Then  $\frac{1}{z_{kj}}- \frac{1}{z_{k1}}=\frac{z_{1j}}{z_{kj}z_{k1}} \prec \epsilon^2$ for all $k>3, 1\le j\le 3$, and so
	\[\bar{\Lambda}\sum_{j=1}^{3}\Gamma_j w_j=\sum_{k\ge 4} \sum_{j=1}^{3}\frac{\Gamma_k\Gamma_j}{z_{kj}}=\sum_{k\ge 4} \sum_{j=1}^{3}\Gamma_k\Gamma_j  (\frac{1}{z_{k1}}+\frac{1}{z_{kj}}- \frac{1}{z_{k1}} ) \prec \epsilon^{2}.\]

	By the fact that  $w_{12}, w_{13}, w_{23}\approx \epsilon^{2}$, the equations 
	\[\sum_{j=1}^{3}\Gamma_j w_j=\Gamma_{2}w_{12}+\Gamma_{3}w_{13}=\Gamma_{1}w_{21}+\Gamma_{3}w_{23}=\Gamma_{1}w_{31}+\Gamma_{2}w_{32} \prec \epsilon^2\]
	imply that 
	\begin{equation}\label{equ:triangle1}
		\frac{w_{12}}{\Gamma_{3}}\sim\frac{w_{23}}{\Gamma_{1}}\sim\frac{w_{31}}{\Gamma_{2}}\approx \epsilon^{2}.
	\end{equation}
	By the isolation of this triangle in the $z$-diagram, it holds that 
	\begin{equation}\label{equ:triangle2}
		\Lambda z_{1}\sim \frac{\Gamma_2}{w_{21}}+\frac{\Gamma_3}{w_{31}}, ~\Lambda z_{2}\sim \frac{\Gamma_1}{w_{12}}+\frac{\Gamma_3}{w_{32}},~\Lambda z_{3}\sim \frac{\Gamma_1}{w_{13}}+\frac{\Gamma_2}{w_{23}}.
	\end{equation}
	Since $z_1\sim z_2\sim z_3$, the equations \eqref{equ:triangle1} and \eqref{equ:triangle2} lead to 
	\[\frac{\Gamma_1}{\Gamma_{2}}-\frac{\Gamma_2}{\Gamma_{1}}=\frac{\Gamma_2}{\Gamma_{3}}-\frac{\Gamma_3}{\Gamma_{2}}=\frac{\Gamma_3}{\Gamma_{1}}-\frac{\Gamma_1}{\Gamma_{3}}.\] 
	This  contradicts with $\Gamma_{1}+\Gamma_{2}+\Gamma_{3}=0$. 
\end{proof}

Similarly, we have the following result.
\begin{proposition}\label{Prp:triangle2} 
	Suppose that a diagram has an isolated triangle of \(z\)-strokes in the \(z\)-diagram, where all three vertices, say \(\textbf{1}, \textbf{2}, \textbf{3}\), are \(z\)-circled. If \(z_1 \sim z_2 \sim z_3\), then there exists some \(k > 3\) such that \(z_{k1} \prec \epsilon^{-2}\). 
\end{proposition}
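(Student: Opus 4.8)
The plan is to mirror the proof of Proposition \ref{Prp:triangle} almost verbatim, arguing by contradiction. Assume that $z_{k1}\not\prec\epsilon^{-2}$ for every $k>3$; since $z_{k1}\preceq\epsilon^{-2}$ always holds by Proposition \ref{Estimate1}, this means $z_{k1}\approx\epsilon^{-2}$ for all $k>3$. I would then extract an algebraic relation on $\Gamma_1,\Gamma_2,\Gamma_3$ incompatible with $\Gamma_1+\Gamma_2+\Gamma_3=0$. First I record the consequences of the hypotheses: the three vertices being $z$-circled gives $z_1,z_2,z_3\approx\epsilon^{-2}$, so $z_1\sim z_2\sim z_3$ forces $z_{12},z_{13},z_{23}\prec\epsilon^{-2}$, i.e. the vertices are $z$-close, and Rule IV then yields $\Gamma_1+\Gamma_2+\Gamma_3=0$. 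Since the triangle is fully $z$-stroked, Proposition \ref{Estimate1} gives $w_{12},w_{13},w_{23}\approx\epsilon^2$; since it is isolated in the $z$-diagram there is no $z$-stroke from $\{1,2,3\}$ to any $k>3$, hence $Z_{k1}=1/w_{k1}\prec\epsilon^{-2}$.

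Next I would form $\overline{\Lambda}\sum_{j=1}^3\Gamma_jw_j$ from the conjugate equations $\overline{\Lambda}w_n=\sum_{j\neq n}\Gamma_j/z_{jn}$. The terms internal to $\{1,2,3\}$ cancel in antisymmetric pairs (as $z_{nj}=-z_{jn}$), leaving $\sum_{k\ge 4}\sum_{j=1}^3\Gamma_k\Gamma_j/z_{kj}$. Writing $1/z_{kj}=1/z_{k1}+(1/z_{kj}-1/z_{k1})$, the leading piece vanishes because $\Gamma_1+\Gamma_2+\Gamma_3=0$, while $1/z_{kj}-1/z_{k1}=-z_{1j}/(z_{kj}z_{k1})$. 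This is the one spot where the bookkeeping departs from Proposition \ref{Prp:triangle}: the triangle is only $z$-stroked, so I only know $z_{1j}\prec\epsilon^{-2}$ rather than $z_{1j}\approx\epsilon^2$; however the contradiction hypothesis supplies $z_{k1}\approx\epsilon^{-2}$, whence $z_{kj}\sim z_{k1}\approx\epsilon^{-2}$ and $z_{kj}z_{k1}\approx\epsilon^{-4}$, so the quotient is still $\prec\epsilon^2$. This is precisely the trade-off that makes the conclusion weaker here ($z_{k1}\prec\epsilon^{-2}$ in place of $z_{k1}\preceq 1$). It follows that $\sum_{j=1}^3\Gamma_jw_j\prec\epsilon^2$.

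From this point the remaining steps are identical to Proposition \ref{Prp:triangle}. Using $\Gamma_1+\Gamma_2+\Gamma_3=0$ to rewrite $\sum_{j=1}^3\Gamma_jw_j$ as $\Gamma_2w_{12}+\Gamma_3w_{13}=\Gamma_1w_{21}+\Gamma_3w_{23}=\Gamma_1w_{31}+\Gamma_2w_{32}$, each $\prec\epsilon^2$ while the individual $w_{ij}\approx\epsilon^2$, gives $w_{12}/\Gamma_3\sim w_{23}/\Gamma_1\sim w_{31}/\Gamma_2$. The isolation in the $z$-diagram lets me drop the $k>3$ contributions to $\Lambda z_n=\sum_{j\neq n}\Gamma_j/w_{jn}$, yielding $\Lambda z_1\sim\Gamma_2/w_{21}+\Gamma_3/w_{31}$ together with its cyclic analogues. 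Substituting the $w$-ratios and invoking $z_1\sim z_2\sim z_3$ collapses everything to $\frac{\Gamma_1}{\Gamma_2}-\frac{\Gamma_2}{\Gamma_1}=\frac{\Gamma_2}{\Gamma_3}-\frac{\Gamma_3}{\Gamma_2}=\frac{\Gamma_3}{\Gamma_1}-\frac{\Gamma_1}{\Gamma_3}$, which with $\Gamma_1+\Gamma_2+\Gamma_3=0$ is impossible for nonzero real vorticities.

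The main obstacle is purely the order-of-magnitude accounting in the second paragraph: verifying that the weaker bound $z_{1j}\prec\epsilon^{-2}$ coming from the $z$-stroked (rather than $zw$-stroked) triangle is exactly offset by the contradiction hypothesis $z_{k1}\approx\epsilon^{-2}$, so that $\sum_{j=1}^3\Gamma_jw_j\prec\epsilon^2$ still holds. Once that estimate is secured, the derivation of the two relation-systems and the final algebraic contradiction are already carried out in Proposition \ref{Prp:triangle}.
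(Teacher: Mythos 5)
Your proposal is correct and takes essentially the same route as the paper's proof, which itself proceeds by contradiction from $z_{k1}\approx\epsilon^{-2}$ for all $k>3$, establishes $\overline{\Lambda}\sum_{j=1}^{3}\Gamma_j w_j\prec\epsilon^{2}$ via $\frac{1}{z_{kj}}-\frac{1}{z_{k1}}=\frac{z_{1j}}{z_{kj}z_{k1}}\prec\epsilon^{2}$, and then repeats the ratio computation of Proposition \ref{Prp:triangle} to reach the relation $\frac{\Gamma_1}{\Gamma_2}-\frac{\Gamma_2}{\Gamma_1}=\frac{\Gamma_2}{\Gamma_3}-\frac{\Gamma_3}{\Gamma_2}=\frac{\Gamma_3}{\Gamma_1}-\frac{\Gamma_1}{\Gamma_3}$ contradicting $\Gamma_1+\Gamma_2+\Gamma_3=0$. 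Your explicit check that the weaker bound $z_{1j}\prec\epsilon^{-2}$ (the triangle being only $z$-stroked) is exactly offset by the stronger contradiction hypothesis $z_{k1}\approx\epsilon^{-2}$ is precisely the bookkeeping the paper compresses into the phrase ``similar to the argument of the above result.''
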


\begin{proof}
	Suppose that 	$z_1\sim z_2\sim z_3\approx \epsilon^{-2}$. By Proposition \ref{Estimate1} and Rule IV, we have $\Gamma_{1}+\Gamma_{2}+\Gamma_{3}=0$.
	Suppose that it holds  $z_{k1} \approx \epsilon^{-2}$ for all $k>3$. Then  $\frac{1}{z_{kj}}- \frac{1}{z_{k1}}=\frac{z_{1j}}{z_{kj}z_{k1}} \prec \epsilon^2$ for all $k>3, 1\le j\le 3$. Similar to the argument of the above result, we have  $\bar{\Lambda}\sum_{j=1}^{3}\Gamma_j w_j \prec \epsilon^{2}$, 
	\[  \frac{w_{12}}{\Gamma_{3}}\sim\frac{w_{23}}{\Gamma_{1}}\sim\frac{w_{31}}{\Gamma_{2}}\approx \epsilon^{2},\]
	\begin{equation}\notag
		\Lambda z_{1}\sim \frac{\Gamma_2}{w_{21}}+\frac{\Gamma_3}{w_{31}}, ~\Lambda z_{2}\sim \frac{\Gamma_1}{w_{12}}+\frac{\Gamma_3}{w_{32}},~\Lambda z_{3}\sim \frac{\Gamma_1}{w_{13}}+\frac{\Gamma_2}{w_{23}}, 
	\end{equation}
	and 
	$\frac{\Gamma_1}{\Gamma_{2}}-\frac{\Gamma_2}{\Gamma_{1}}=\frac{\Gamma_2}{\Gamma_{3}}-\frac{\Gamma_3}{\Gamma_{2}}=\frac{\Gamma_3}{\Gamma_{1}}-\frac{\Gamma_1}{\Gamma_{3}}.$
	This  contradicts with $\Gamma_{1}+\Gamma_{2}+\Gamma_{3}=0$. 
\end{proof}

\begin{proposition}\label{Prp:dumbbell} 
	Assume that vertices \textbf{1} and \textbf{2} are both $z$- and $w$-circled and connected by a $zw$-edge, and the sub-diagram formed by the two vertices  is isolated in the $z$-diagram. Assume that 
	vertices \textbf{3} and \textbf{4} are also both $z$- and $w$-circled and connected by a $zw$-edge,  and  is isolated in the $z$-diagram. 
	Then, there must exist some $k>4$ such that at least one among  $z_{k1}, w_{k1}, z_{k3}, w_{k3}$ is bounded (i.e., $\preceq 1$). 
\end{proposition}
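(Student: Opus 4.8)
The plan is to argue by contradiction in the spirit of Propositions \ref{Prp:triangle} and \ref{Prp:triangle2}, extracting from the two isolated dumbbells two incompatible limiting values of the single ratio $w_{21}/w_{43}$. First I would read off the orders from Proposition \ref{Estimate1}: the two $zw$-edges give $z_{12},w_{12},z_{34},w_{34}\approx\epsilon^{2}$, and the eight circles give $z_1,\dots,z_4,w_1,\dots,w_4\approx\epsilon^{-2}$, so $z_1\sim z_2$, $z_3\sim z_4$, $w_1\sim w_2$, $w_3\sim w_4$. Since each dumbbell is an isolated component of the $z$-diagram consisting of two $z$-circled, $z$-close vertices, Rule IV gives $\Gamma_1+\Gamma_2=0$ and $\Gamma_3+\Gamma_4=0$. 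Suppose, for contradiction, that for every $k>4$ all four quantities $z_{k1},w_{k1},z_{k3},w_{k3}$ are unbounded, i.e. $\succ 1$.

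The crucial first step is to show the two dumbbells occupy the same leading $z$-position. Weighting the conjugate equations $\overline{\Lambda}w_n=\sum_{j\neq n}\Gamma_j/z_{jn}$ for $n=1,2$ by $\Gamma_1$ and $\Gamma_2=-\Gamma_1$ and adding, the $j\in\{1,2\}$ terms cancel and $z_{j2}-z_{j1}=z_{12}$ produces the exact identity $\overline{\Lambda}\,w_{21}=z_{12}\sum_{j\geq 3}\Gamma_j/(z_{j1}z_{j2})$. Because $w_{21},z_{12}\approx\epsilon^{2}$, the sum on the right is $\approx 1$. The hypothesis $z_{k1}\succ 1$ forces $z_{k2}\sim z_{k1}$ and $\Gamma_k/(z_{k1}z_{k2})\prec 1$ for every $k>4$, so the two terms $j=3,4$ by themselves must be $\approx 1$; but if $z_{13}\approx\epsilon^{-2}$ then each of them is $\preceq\epsilon^{4}$ and their sum is $\prec 1$, a contradiction. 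Hence $z_{13}\prec\epsilon^{-2}$, so $z_3\sim z_1$ and therefore $z_1\sim z_2\sim z_3\sim z_4$.

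With the positions aligned I would compute $w_{21}/w_{43}$ twice. First, the isolation of each dumbbell in the $z$-diagram lets the unique $z$-stroke dominate its $z$-equation $\Lambda z_n=\sum_{j\neq n}\Gamma_j/w_{jn}$, giving $\Lambda z_1\sim\Gamma_2/w_{21}$ and $\Lambda z_3\sim\Gamma_4/w_{43}$; dividing and using $z_1\sim z_3$ yields $w_{21}/w_{43}\to\Gamma_2/\Gamma_4=\Gamma_1/\Gamma_3$. Second, weighting the four equations $\overline{\Lambda}w_n=\sum_{j\neq n}\Gamma_j/z_{jn}$, $n=1,\dots,4$, by $\Gamma_n$ and summing, the within-cluster terms cancel in antisymmetric pairs while $\sum_{n=1}^{4}\Gamma_n/z_{kn}=\Gamma_1 z_{12}/(z_{k1}z_{k2})+\Gamma_3 z_{34}/(z_{k3}z_{k4})\prec\epsilon^{2}$ for $k>4$ by the hypothesis; thus $\Gamma_1 w_{21}+\Gamma_3 w_{43}\prec\epsilon^{2}$, and since both summands are $\approx\epsilon^{2}$ their leading parts cancel, giving $w_{21}/w_{43}\to-\Gamma_3/\Gamma_1$. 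Equating the two limits forces $\Gamma_1/\Gamma_3=-\Gamma_3/\Gamma_1$, i.e. $\Gamma_1^{2}+\Gamma_3^{2}=0$, which is impossible for nonzero real vorticities.

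I expect the main obstacle to be the alignment step, namely proving $z_1\sim z_3$: the two-ways-to-compute-a-ratio finish is routine once the clusters share a leading position, but the alignment rests on showing that the forced order-one identity for $\sum_{j\geq3}\Gamma_j/(z_{j1}z_{j2})$ cannot be carried by the inter-cluster pair when $z_{13}\approx\epsilon^{-2}$, which in turn requires the far-field terms to be genuinely negligible. I would also note that only the $z$-parts $z_{k1},z_{k3}\succ1$ of the hypothesis are used, so the argument actually yields the stronger statement that some $k>4$ satisfies $z_{k1}\preceq1$ or $z_{k3}\preceq1$; this a fortiori gives the claim, and the $z\leftrightarrow w$ symmetry of the rules shows the companion $w$-estimates would hold in the same way.
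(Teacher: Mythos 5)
Your proof is correct and takes essentially the same route as the paper's: the identical contradiction hypothesis, the same cluster-alignment step yielding $z_1\sim z_2\sim z_3\sim z_4$ (your exact identity $\overline{\Lambda}\,w_{21}=z_{12}\sum_{j\ge 3}\Gamma_j/(z_{j1}z_{j2})$ is the mirror, under $z\leftrightarrow w$, of the paper's expansion of $\Lambda z_{12}$), and the same finish, combining $\Gamma_1 Z_{12}\sim \Gamma_3 Z_{34}$ from isolation with $\Gamma_1 w_{21}\sim -\Gamma_3 w_{43}$ from the $\Gamma_n$-weighted sum of the conjugate equations to force $\Gamma_1^2+\Gamma_3^2=0$. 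Your closing remark that only the $z$-side hypotheses $z_{k1},z_{k3}\succ 1$ are actually used is a valid slight sharpening, consistent with (and essentially implicit in) the paper's computation.
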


\begin{proof}
	By Proposition \ref{Estimate1} and Rule IV, we have  \[z_1\sim z_2,  z_3\sim z_4, \ w_1\sim w_2, w_3 \sim w_4, \ \Gamma_{1}+\Gamma_{2}=0, \Gamma_3+\Gamma_{4}=0.\]
	Suppose that it holds that $w_{k1}\succ 1$ for all $k>4$. Then  $\frac{1}{w_{k2}}- \frac{1}{w_{k1}}=\frac{w_{12}}{w_{k2}w_{k1}} \prec \epsilon^2$ for all $k>4$.  
	Note that $z_{12}\approx \epsilon^2$, and 
	\[\Lambda z_{12}= (\Gamma_1+\Gamma_2)Z_{12}+ \Gamma_3( \frac{w_{21}}{w_{32}w_{31}} -\frac{w_{21}}{w_{42}w_{41}}) + \sum_{k>4}\Gamma_k(\frac{1}{w_{k2}}- \frac{1}{w_{k1}} ).     \]
	We conclude that  
	\[\frac{w_{21}}{w_{31}w_{32}}\approx  \frac{w_{21}}{w_{41}w_{42}}\succeq \epsilon^{2}\Rightarrow w_{31}\preceq 1\Rightarrow w_1\sim w_2\sim w_3\sim w_4.\]
	Similarly, we have\[z_1\sim z_2\sim z_3\sim z_4.\]

	Note that 
	\[\overline{\Lambda}\sum_{j=1}^{4}\Gamma_j w_j=\sum_{k> 4} \sum_{j=1}^{4}\frac{\Gamma_k\Gamma_j}{z_{kj}}=\sum_{k> 4} \Gamma_k \Gamma_1 (\frac{1}{z_{k1}}- \frac{1}{z_{k2}})  + \sum_{k> 4} \Gamma_k \Gamma_3 (\frac{1}{z_{k3}}- \frac{1}{z_{k4}})  \prec \epsilon^{2}. \]
	Then the equation $\Gamma_2 w_{12}+\Gamma_4 w_{34}=\sum_{j=1}^{4}\Gamma_j w_j$ leads to
	\[\Gamma_2 w_{12}\sim -\Gamma_4 w_{34}, \ \text{or} \ \Gamma_2 Z_{34}\sim -\Gamma_4 Z_{12} .\] 
	On the other hand, the isolation of the two segments implies 
	\[\Lambda z_2\sim \Gamma_1 Z_{12}, \ \Lambda z_4\sim \Gamma_3 Z_{34}, \Rightarrow \Gamma_1 Z_{12}\sim \Gamma_3 Z_{34}.\]
	As a result, we have
	\[\Gamma_1\Gamma_2=-\Gamma_3\Gamma_4, \ \text{or} \ \Gamma_1^2+\Gamma_3^2=0,\] 
	which is a contradiction.
	
\end{proof}

\begin{proposition}\label{Prp:quadrilateral} 
	Assume that there is a quadrilateral  with  vertices \textbf{1,  2, 3, 4}, that is fully $z$- and $w$-stroked, and fully  $w$-circled.   Moreover, the quadrilateral is isolated  in the $w$-diagram. 
	Then, there must exist some  $k>4$ such that $w_{k1} \preceq 1$. 
\end{proposition}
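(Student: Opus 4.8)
The plan is to argue by contradiction, following the template of Propositions \ref{Prp:triangle} and \ref{Prp:dumbbell}. First I would record the consequences of the hypotheses through Proposition \ref{Estimate1} and Rule IV: being fully \(z\)- and \(w\)-stroked forces \(z_{ij},w_{ij}\approx\epsilon^{2}\) for all \(i,j\in\{1,2,3,4\}\); being fully \(w\)-circled forces \(w_{i}\approx\epsilon^{-2}\), hence \(w_{1}\sim w_{2}\sim w_{3}\sim w_{4}\); and Rule IV gives \(\Gamma_{1}+\Gamma_{2}+\Gamma_{3}+\Gamma_{4}=0\). Now suppose, for contradiction, that \(w_{k1}\succ 1\) for every \(k>4\). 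A \(z\)-stroke from a vertex of the quadrilateral to an outside vertex \(k\) would mean \(w_{jk}\approx\epsilon^{2}\), whence \(w_{k1}\approx\epsilon^{2}\prec 1\); so the assumption forces the quadrilateral to be isolated in the \(z\)-diagram as well. Since \(w_{kj}\sim w_{k1}\succ 1\) for \(j\le 4\), the outside contributions to the \(z\)-equations are negligible, and summing \(\Lambda z_{n}=\sum_{j\ne n}\Gamma_{j}Z_{jn}\) over the quadrilateral (the inner part being antisymmetric) together with \(\sum_{i=1}^{4}\Gamma_{i}=0\) yields \(\sum_{j=1}^{4}\Gamma_{j}z_{1j}\prec\epsilon^{2}\); that is, the rescaled \(z\)-cluster has vanishing moment of vorticity in the limit.

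Next I would split on whether the quadrilateral carries a \(z\)-circle. If no vertex is \(z\)-circled, then the quadrilateral is a fully \(z\)-stroked sub-diagram existing in isolation with no \(z\)-circle, while at the same time it is an isolated component of the \(w\)-diagram all of whose vertices are \(w\)-circled. This is exactly the configuration ruled out by Corollary \ref{Cor:L&Gamma}(1) (applied with \(K=I=\{1,2,3,4\}\)), so the contradiction is immediate; concretely it comes from \(L_{1234}=0\) (Proposition \ref{Prp:LI}) together with \(\sum\Gamma_{i}=0\) forcing \(\sum\Gamma_{i}^{2}=0\).

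It remains to treat the case in which some vertex is \(z\)-circled; because \(z_{1j}\approx\epsilon^{2}\prec\epsilon^{-2}\), this forces all four to be \(z\)-circled and \(z_{1}\sim z_{2}\sim z_{3}\sim z_{4}\). In the five-vortex setting the moment relation \(\sum_{j\le4}\Gamma_{j}z_{j}\prec\epsilon^{2}\) then makes the single remaining vertex \(z\)-small, so that \(z_{k1}\succ 1\) holds as well and the picture becomes symmetric in \(z\) and \(w\). Isolation in both diagrams gives the leading balances \(\Lambda z_{j}\sim\sum_{n\le4,\,n\ne j}\Gamma_{n}/w_{nj}\) and \(\overline{\Lambda}\,w_{j}\sim\sum_{n\le4,\,n\ne j}\Gamma_{n}/z_{nj}\), and since \(z_{1}\sim\cdots\sim z_{4}\) and \(w_{1}\sim\cdots\sim w_{4}\) the four right-hand sides in each family agree to leading order. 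Writing \(\hat z_{ij}=z_{ij}/\epsilon^{2}\) and \(\hat w_{ij}=w_{ij}/\epsilon^{2}\), this says that both rescaled clusters converge to rigidly translating configurations of the auxiliary holomorphic four-vortex problem sharing the vorticities \(\Gamma_{1},\dots,\Gamma_{4}\), each with vanishing moment of vorticity. I would then try to eliminate the positions exactly as in the proof of Proposition \ref{Prp:triangle} and extract a relation among \(\Gamma_{1},\dots,\Gamma_{4}\) incompatible with \(\Gamma_{1}+\Gamma_{2}+\Gamma_{3}+\Gamma_{4}=0\).

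The main obstacle is precisely this last elimination. For the triangle a single linear relation, the vanishing of the moment of vorticity, already pins the shape via \(w_{jk}\propto\Gamma_{\mathrm{opposite}}\), after which equality of the velocities collapses to a pure vorticity identity; for four vertices one linear relation no longer determines the cluster, so the positions cannot be removed from a single cluster. The argument must therefore couple the two clusters: the \(O(\epsilon^{4})\) corrections in \(\Lambda z_{1j}\) and \(\overline{\Lambda}\,w_{1j}\) tie the deviation of each rescaled cluster from its rigidly translating limit to the shape of the other, and it is this coupled second-order system---combined with \(\sum\Gamma_{i}=0\) and the non-degeneracy guaranteed by the distinctness of the limiting points (\(z_{ij},w_{ij}\approx\epsilon^{2}\))---that I expect to yield the contradiction. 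Making this coupling quantitative is the delicate point, and is where I would concentrate the effort.
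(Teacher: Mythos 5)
Your setup coincides with the paper's up to the moment relation: Rule IV gives $\Gamma_1+\Gamma_2+\Gamma_3+\Gamma_4=0$, and under the contradiction hypothesis $w_{k1}\succ 1$ the bound $\tfrac{1}{w_{kj}}-\tfrac{1}{w_{k1}}\prec\epsilon^2$ yields $\Lambda\sum_{j=1}^4\Gamma_j z_j\prec\epsilon^2$, hence $\Gamma_2z_{12}\sim-\Gamma_3z_{13}-\Gamma_4z_{14}$. But at the decisive step you stop, declaring that ``the positions cannot be removed from a single cluster'' and deferring to an unexecuted second-order scheme coupling the two clusters. This is exactly where the paper's proof lives, and it needs none of that machinery: it uses only the leading-order $w$-equations of the single $z$-cluster. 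Set $z_{13}\sim a\epsilon^2$, $z_{14}\sim b\epsilon^2$; the moment relation determines $z_{12},z_{23},z_{24},z_{34}$ to leading order in terms of $a,b$ and the vorticities; since $w_1\sim w_2\sim w_3\sim w_4$, write $\bar\Lambda w_k\sim 1/(c\epsilon^2)$ with a single unknown $c$; isolation in the $w$-diagram (a hypothesis of the proposition, so your case analysis on $z$-circles is unnecessary --- the paper never invokes $z$-circle information or your derived isolation in the $z$-diagram beyond the moment estimate) gives the four balances $\bar\Lambda w_k\sim\sum_{j\le 4,\,j\ne k}\Gamma_j/z_{jk}$. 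That is four polynomial equations in the three unknowns $a,b,c$; eliminating them (the paper computes a Gr\"obner basis after setting $c=1$) produces the pure vorticity relation $b^5(\Gamma_1+\Gamma_3+\Gamma_4)\bigl(\Gamma_1^2+\Gamma_1\Gamma_3+\Gamma_1\Gamma_4+\Gamma_3^2+\Gamma_3\Gamma_4+\Gamma_4^2\bigr)=0$, which is impossible since $b\ne 0$ (because $z_{14}\approx\epsilon^2$), $\Gamma_1+\Gamma_3+\Gamma_4=-\Gamma_2\ne 0$, and the quadratic factor equals $\tfrac12(\Gamma_1^2+\Gamma_2^2+\Gamma_3^2+\Gamma_4^2)\ne 0$.

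The gap, concretely, is a miscount in your feasibility analysis. You are right that the one linear moment relation leaves a two-parameter family of shapes $(a,b)$, but you overlook that the equal-velocity conditions across the four $w$-circled vertices supply four equations sharing only the one additional unknown $c$: the leading-order system is overdetermined by exactly one equation, which is precisely what forces a constraint on the vorticities alone and yields the contradiction with $\sum_{j\le 4}\Gamma_j=0$. Consequently no $O(\epsilon^4)$ corrections and no coupling between the rescaled $z$- and $w$-clusters are needed, and the ``delicate point'' where you proposed to concentrate effort is a detour rather than the missing step. Your case of no $z$-circle is correct (it is Corollary \ref{Cor:L&Gamma}(1), via Proposition \ref{Prp:LI}) but redundant in the paper's uniform argument; your main case is left without a proof, so the proposal as it stands does not establish the proposition.
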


\begin{proof}
	We establish the result by contradiction. 
	Rule IV implies that   $\sum_{j=1}^{4}\Gamma_j=0.$ 	Suppose that it holds that $w_{k1}\succ 1$ for all $k>4$. Then $\frac{1}{w_{kj}}- \frac{1}{w_{k1}}\prec  \epsilon^{2}$ for $k>4, j\le4$, so
	\[\Lambda\sum_{j=1}^{4}\Gamma_j z_j=\sum_{k>4}\Gamma_k \sum_{j=1}^{4}\frac{\Gamma_j}{w_{kj}}=\sum_{k>4}\Gamma_k  \sum_{j=1}^{4} \Gamma_j  (\frac{1}{w_{k1}}+\frac{1}{w_{kj}}- \frac{1}{w_{k1}} )   \prec \epsilon^{2}.\]
	Then  \[  \epsilon^2 \succ \sum_{j=1}^{4}\Gamma_j z_j=\sum_{j=2, 3, 4}\Gamma_j z_{1j}\Rightarrow \Gamma_2 z_{12}\sim -\Gamma_3 z_{13}-\Gamma_4 z_{14}\approx \epsilon^{2}.  \]
	Set $z_{13}\sim a \epsilon^2, z_{14}\sim b \epsilon^2$, where $a\ne b$ are some nonzero constants. Then 
	\begin{align*}
		&z_{12}\sim -\frac{\Gamma_3 a+\Gamma_4 b}{\Gamma_2} \epsilon^2,    & z_{23}\sim \frac{a (\Gamma_2+\Gamma_3)+\Gamma_4 b}{\Gamma_2} \epsilon^2,\\
		&z_{24}\sim \frac{\Gamma_3 a+b (\Gamma_2+\Gamma_4)}{\Gamma_2} \epsilon^2,    &z_{34}\sim (b-a) \epsilon^2. 
	\end{align*}
	Since $w_1\sim w_2\sim w_3\sim w_4$, we set $\bar{\Lambda}w_k\sim \frac{1}{c\epsilon^2}, k=1, 2, 3, 4$.
	Substituting those into the system
	\[\bar{\Lambda}w_k\sim \sum_{j\ne k, j=1}^{4}\frac{\Gamma_j}{z_{jk}}, \ k=1, 2,3,4,\]
	which is from the isolation of the quadrilateral in $w$-diagram. 
	We obtain four homogeneous  polynomials of the three variables $a, b, c$. Thus, we set $c=1$, and obtain the following four polynomials of the five variables $a,b,  \Gamma_1, \Gamma_{3}, \Gamma_{4}$, 
	\begin{align*}
		&a^2 (-\Gamma_3) (b+\Gamma_4)+a b \left(-\Gamma_4 (b-2 \Gamma_3)+\Gamma_1^2+2 \Gamma_1 (\Gamma_3+\Gamma_4)\right)-b^2 \Gamma_3 \Gamma_4=0,\\
		&a^3 \Gamma_3^2 (\Gamma_1+\Gamma_4)+a^2 \Gamma_3 \left(-b \left(\Gamma_1^2+\Gamma_1 \Gamma_3+\Gamma_4 (2 \Gamma_3-\Gamma_4)\right)-(\Gamma_1-\Gamma_3+\Gamma_4) (\Gamma_1+\Gamma_3+\Gamma_4)^2\right)\\
		&-a b \left(\Gamma_1^2 \Gamma_4 (b-4 \Gamma_3)+\Gamma_1 \left(\Gamma_4^2 (b+2 \Gamma_4)+2 \Gamma_3^3-2 \Gamma_3^2 \Gamma_4-2 \Gamma_3 \Gamma_4^2\right)-\Gamma_3^2 \Gamma_4 (b+2 \Gamma_4)\right)\\
		&-a b \left(2 b \Gamma_3 \Gamma_4^2-\Gamma_1^4-2 \Gamma_1^3 (\Gamma_3+\Gamma_4)+\Gamma_3^4+\Gamma_4^4\right)\\
		&+b^2 \Gamma_4 \left(\Gamma_1 \left(\Gamma_4 (b+\Gamma_4)-3 \Gamma_3^2-2 \Gamma_3 \Gamma_4\right)+\Gamma_3 \Gamma_4 (b+\Gamma_4)-\Gamma_1^3-\Gamma_1^2 (3 \Gamma_3+\Gamma_4)-\Gamma_3^3-\Gamma_3^2 \Gamma_4+\Gamma_4^3\right)=0,\\
		&a^3 (\Gamma_1+\Gamma_4)+a^2 (\Gamma_3 (2 \Gamma_1+\Gamma_3+2 \Gamma_4)-b (\Gamma_1+2 \Gamma_4))+a b \left(b \Gamma_4-2 \Gamma_1 \Gamma_3-\Gamma_3^2-2 \Gamma_3 \Gamma_4\right)-b^2 \Gamma_1 \Gamma_4=0,\\
		&a^2 \Gamma_3 (b-\Gamma_1)-a b (b (\Gamma_1+2 \Gamma_3)+\Gamma_4 (2 \Gamma_1+2 \Gamma_3+\Gamma_4))+b^2 (b (\Gamma_1+\Gamma_3)+\Gamma_4 (2 \Gamma_1+2 \Gamma_3+\Gamma_4))=0. 
	\end{align*}
	Tedious but standard computation, such as calculating the Gr\"{o}bner basis,   yields 
	\[  b^5 (\Gamma_1+\Gamma_3+\Gamma_4) \left(\Gamma_1^2+\Gamma_1 \Gamma_3+\Gamma_1 \Gamma_4+\Gamma_3^2+\Gamma_3 \Gamma_4+\Gamma_4^2\right)=0. \]
	It is a contradiction since $b\ne 0, \Gamma_1+\Gamma_3+\Gamma_4=-\Gamma_2\ne 0$ and 
	\[\Gamma_1^2+\Gamma_1 \Gamma_3+\Gamma_1 \Gamma_4+\Gamma_3^2+\Gamma_3 \Gamma_4+\Gamma_4^2=\frac{1}{2}( \Gamma_1^2+\Gamma_2^2+\Gamma_3^2+\Gamma_4^2)\ne 0.\]
\end{proof}

\section{The thirty-one possible  diagrams for the planar five-vortex central configurations} \label{sec:matrixrules}

In this section,  we derive  all possible diagrams for the planar five-vortex central configurations. 
 As in the case of the 5-body  problem \cite{Albouy2012Finiteness} in celestial mechanics,  we divide possible diagrams into groups according to the maximal number of strokes from a bicolored vertex. During the analysis of all the possibilities we rule some of them out immediately. The  ones we cannot exclude without further work   are incorporated into the list of   thirty-one  diagrams,  shown in  Figure \ref{fig:list1} and \ref{fig:list2} of Section \ref{sec:diagram&constraints}.

We call a \emph{bicolored vertex} of the diagram a vertex which connects at least
a stroke of $z$-color with at least a stroke of $w$-color.  The number of strokes from
a bicolored vertex is at least 2 and at most 8. Given a diagram, we
define $C$ as the maximal number of strokes from a bicolored vertex. We use
this number to classify all possible diagrams.

Recall that the $z$-diagram indicates the maximal terms among a finite set
of terms. It is nonempty. If there is a circle, there is an edge of the same
color emanating from it. So there is at least a $z$-stroke, and similarly, at least a
$w$-stroke. \textbf{In each diagram,  the five  vertices are placed  at the 5-th roots of unity, in the counterclockwise order. }

\subsection{No bicolored vertex }

There is at least one isolated edge, which is not a $zw$-edge. Let us say it is a $z$-edge. The complement has 3 bodies. There three can have one or  three $w$-edges according to Rule VI.

For one $w$-edge, the attached bodies have to be $w$-circled by Rule I. This is the first diagram in Figure \ref{fig:C=0}.

For three $w$-edges, the three edges form a triangle. There are three possibilities for the number of  $w$-circled vertices: it is either zero, or two or three (one is not possible by Rule III.)  They constitute the last three diagrams in Figure \ref{fig:C=0}.

\textbf{Hence, we have four possible diagrams, as shown in Figure \ref{fig:C=0}. }

	\begin{figure}[h!]
	
	\centering
	\includegraphics[width=0.8\textwidth]{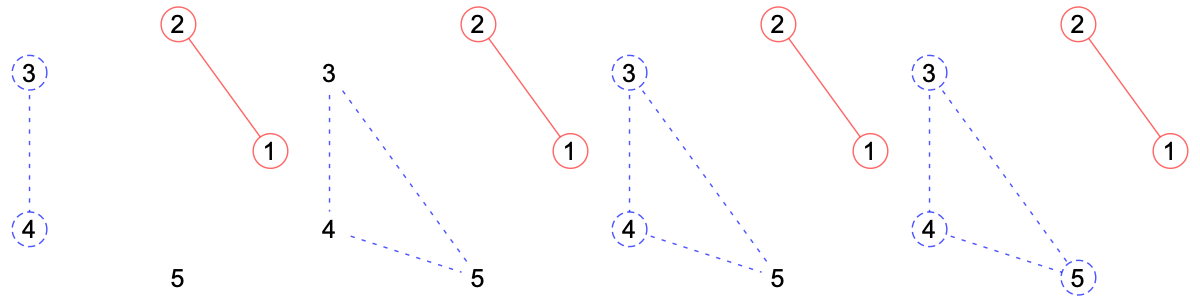}
	
	\caption{Four possible diagrams for no bicolored vertex. }
	\label{fig:C=0}
\end{figure}

\subsection{$C=2$}

There are two cases: a $zw$-edge exists or not.

If it is present,  it is isolated. Let us say, vertex $\textbf{1}$ and vertex $\textbf{2}$ are connected by one $zw$-edge.    Among vertices $\textbf{3}$, $\textbf{4}$, and $\textbf{5}$, there exist both  $z$ and $w$-circle. If none of the three vertices is $z$-circled, we have $\Gamma_1+\Gamma_2=0$ by Rule IV. On the other hand, since vertices $\textbf{3}$, $\textbf{4}$, and $\textbf{5}$ are not $z$-circled, they are not $z$-close to vertex $\textbf{1}$ and vertex $\textbf{2}$ . Then
Proposition \ref{Prp:sumT12} implies that $\Gamma_1+\Gamma_2\ne0$.  This is a contradiction.

Then Rule I implies that there is at least one $z$-stroke and one $w$-stroke among the cluster of vertices $\textbf{3}$, $\textbf{4}$, and $\textbf{5}$. There are two possibilities: whether there is another $zw$-edge or not.

If another $zw$-edge is present, then it is again isolated. This is the first diagram in Figure \ref{fig:C=2}. Note that $w_{15}\approx \epsilon^{-2}$, which  contradicts with Proposition \ref{Prp:dumbbell}, thus impossible.

If another $zw$-edge is not present, there is at least one edge in both color.
 By the circling method, the adjacent vertex is $z$- and $w$-circled.  By the Estimate, the $z$-edge implies the two attached vertices are $w$-close. Then the two ends are both $w$-circled by Rule II. Thus, all three vertices are $z$- and $w$-circled.  Then there are  more strokes in the diagram. This is a contradiction.

If there is no $zw$-edge,  there are adjacent $z$-edges and $w$-edges. From any such adjacency there is no other edge.  Suppose that  vertex $\textbf{1}$ connects with vertex $\textbf{4}$ by $w$-edges and connects with $\textbf{2}$ by $z$-edges. The circling method implies that $\textbf{1}$ is $z$- and $w$-circled, 2 is $w$-circled and 4 is $z$-circled. The color of  $\textbf{2}$ and $\textbf{4}$ forces the color of edges from the circle. If the two edgers go to the same vertex, we get the diagram corresponding to Roberts' continuum at infinity, shown as the second  in Figure \ref{fig:C=2}.

 If the two edgers go to the different  vertices,  the circling method demands a cycle with alternating colors, which is impossible since we have only five edges.

\textbf{Hence,  there is only one  possible  diagram,  the second one in Figure \ref{fig:C=2}.}
\begin{figure}[!h]
	
	\centering
	\includegraphics[width=0.4\textwidth]{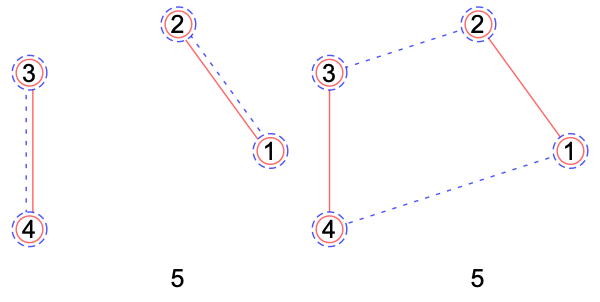}
	
	\caption{Two diagrams for $C=2$. The first one has been excluded.}
	\label{fig:C=2}
\end{figure}

\subsection{$C=3$}

Consider a bicolored vertex  with three strokes. It is vertex $\textbf{1}$ in Figure \ref{fig:C=3},     or connects a single stroke to a $zw$-edge.

We start with the   first  case.  Let us say vertex $\textbf{1}$ connects with vertex $\textbf{2}$ and vertex $\textbf{3}$ by   $z$-edges, and  connects with  vertex $\textbf{4}$ by  a $w$-edge.  There is a $z_{23}$-stroke by Rule VI. The circling method implies that the vertices  $\textbf{1}$, $\textbf{2}$ and vertex $\textbf{3}$ are all $w$-circled, see Figure \ref{fig:C=3}.  Then there is $w$-stroke emanating from $\textbf{2}$ and vertex $\textbf{3}$.  The $w$-stroke may go to vertex $\textbf{4}$, vertex $\textbf{5}$, or it is a  $w_{23}$-stroke.

If  one $w$-stroke goes from vertex $\textbf{2}$ to vertex  $\textbf{4}$, then there is extra $w_{12}$-stroke by Rule VI, which contradict with $C=3$. If all two $w$-strokes go to vertex $\textbf{5}$, then Rule VI implies the existence of $w_{23}$-stroke. This is again a contradiction with $C=3$.

If the $w$-strokes emanating from $\textbf{2}$ and vertex $\textbf{3}$ is just
$w_{23}$-stroke. Then we have an $zw$-edge between $\textbf{2}$, and $\textbf{3}$. Then it is not necessary to discuss the $zw$-edge case.   Then,  we consider the vertex $\textbf{5}$.  It is connected with the previous four vertices or isolated.

If the diagram is connected, vertex $\textbf{5}$   can only connects with vertex $\textbf{4}$  by a $z$-edge (other cases is not possible by Rule VI). Then the circling method implies that vertex $\textbf{5}$   is $w$-circled. Then there is $w$-stroke emanating from $\textbf{5}$. This is a contradiction.

If vertex $\textbf{5}$   is isolated. Then  the circling method implies that all vertices except vertex $\textbf{5}$   are $w$-circled. Only $\textbf{2}$ and vertex $\textbf{3}$ can be $z$-circled, and they are both $z$-circled or both not $z$-circled by Rule IV, see Figure \ref{fig:C=3}.

If both vertex $\textbf{2}$ and vertex $\textbf{3}$ are not $z$-circled, then we have $\Gamma_2+\Gamma_3=0$ and $L_{123}=0$ by Rule IV and Proposition \ref{Prp:LI}. This is a contradiction since
\[  L_{123}= \Gamma_1 (\Gamma_2+\Gamma_3)+ \Gamma_2\Gamma_3.  \]

If both vertex $\textbf{2}$ and vertex $\textbf{3}$ are $z$-circled, then Rule IV implies that
$\Gamma_2+\Gamma_3=0$. On the other hand, Corollary \ref{Cor:sumT12}  implies that
$\Gamma_2+\Gamma_3\ne0$. This is a contradiction.

\textbf{ Thus, the two   diagrams in Figure \ref{fig:C=3} are both excluded. There is  no possible  diagram.}

\begin{figure}[!h]
	
	\centering
	\includegraphics[width=0.4\textwidth]{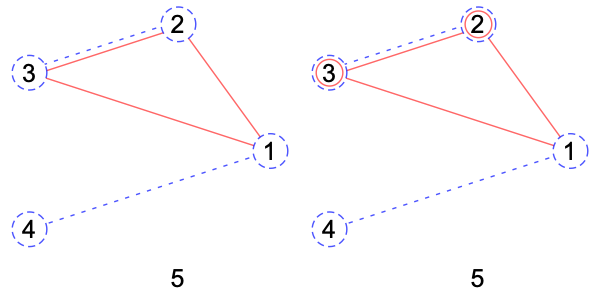}
	
	\caption{Two diagrams for $C=3$. Both have been excluded.}
	\label{fig:C=3}	
\end{figure}


\subsection{$C=4$}

There are five cases: two $zw$-edges,  only one $zw$-edge  and one  edge of each color,    only one $zw$-edge  and two edge of the same color, one  $z$-edge and three $w$-edges, or   two edges of each color emanating from the same vertex.

\subsubsection{} Suppose that there are two $zw$-edges emanating from, e.g.,  vertex  $\textbf{1}$ as on the first diagram in Figure \ref{fig:C=41}.

We get a fully $zw$-edged triangle by Rule VI. This triangle is isolated since
$C=4$.  Since  vertices $\textbf{1}$,  $\textbf{2}$, and $\textbf{3}$ are  $z$-close and $w$-close, if one of them is circled in some color, all of them will be circled in  the same color. Thus,  the first three vertices may be all $z$-circled,  all  $z$-and $w$-circled, or all not circled.

If  vertices $\textbf{1}$,  $\textbf{2}$, and $\textbf{3}$ are  $z$-circled but not $w$-circled, we have $\Gamma_1+\Gamma_2+\Gamma_3=0$ and $L_{123}=0$ by Rule IV and Proposition \ref{Prp:LI}. This is a contradiction since $(\Gamma_1+\Gamma_2+\Gamma_3)^2-2L_{123}\ne 0$. Then the first three vertices can only be all  $z$-and $w$-circled, or all not circled.

The other two vertices can be disconnected, connected by one , e.g.,  $z$-edge, or by one $zw$-edge. Then there are six possibilities, according to whether the first  three vertices are   all  $z$-and $w$-circled, or all not circled, and the connection between the other two vertices. 

Suppose that vertex $\textbf{4}$  and vertex $\textbf{5}$   are connected by one $zw$-edge, and the first three vertices are all not circled. 
Then we have $\Gamma_4+\Gamma_5=0$ by Rule IV. On the other hand, 
Corollary \ref{Cor:sumT12}  implies that $\Gamma_4+\Gamma_5\ne0$.  This is a contradiction.

Then we have the five possibilities, see Figure \ref{fig:C=41}.   Note that Proposition \ref{Prp:triangle}  can further exclude the second and fourth one.

\begin{figure}[!h]
	
	\centering
	\includegraphics[width=0.8\textwidth]{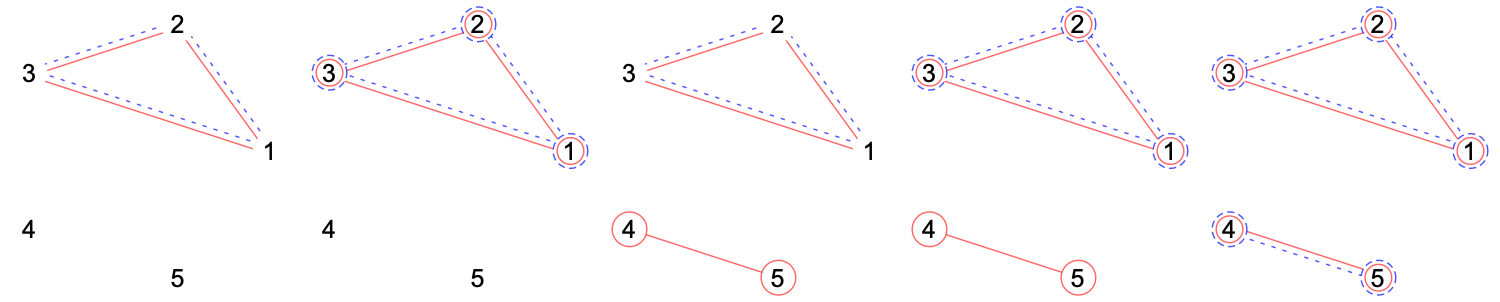}

	\caption{Five diagrams for $C=4$, two $zw$-edges. The second and fourth one has been excluded.}
	\label{fig:C=41}
\end{figure}
\textbf{Hence,  there are three  possible  diagrams,  the first, third and fifth one in Figure \ref{fig:C=41}.}

\subsubsection{}
 Suppose that there are one $zw$-edges and one edge of each color emanating from vertex $\textbf{1}$, as in the first digram of Figure \ref{fig:C=42}. We complete the triangles  by Rule VI.  Note that no more strokes can emanating from vertex $\textbf{1}$ and vertex $\textbf{4}$  since $C=4$. If there are more strokes from vertex $\textbf{2}$,  it can not goes to vertex $\textbf{3}$, since it implies one more stroke emanating from vertex $\textbf{1}$.  Similarly, there is no $z_{35}$-stroke or $w_{25}$-stroke. Then between vertex $\textbf{5}$   and the first four vertices, there can have no edge, one
$z_{25}$-stroke, or one $z_{25}$-stroke and one $w_{35}$-stroke.

For the disconnected diagram,  vertex $\textbf{1}$ and vertex $\textbf{4}$  can not be circled. Otherwise, the circling method implies  either vertex $\textbf{2}$ is be $z$-circled or 3 is $w$-circled,   there should be stroke emanating from vertex $\textbf{2}$ or vertex $\textbf{3}$ . This is a contradiction.  Then vertex $\textbf{2}$ can not be circled, otherwise, vertex $\textbf{3}$  is of the same color by Rule IV. This is a contradiction. Hence, there is no circle in the diagram and this is the first diagram in Figure \ref{fig:C=42}.

If there is only $z_{25}$-stroke, then the circling method implies that  vertices $\textbf{1}$,  $\textbf{2}$, $\textbf{4}$, and $\textbf{5}$ are $z$-circled. Note that vertex $\textbf{3}$  and vertex $\textbf{5}$   can not be $w$-circled, otherwise there are extra $w$-strokes. Then  vertices $\textbf{1}$,  $\textbf{2}$, and $\textbf{4}$ are not $w$-circled
by the circling method.  Note that vertex $\textbf{3}$ must be $z$-circled, otherwise, we have $L_{124}=0$ and $\Gamma_1+\Gamma_4=0$. This is a contradiction.
Then we have the second diagram in Figure \ref{fig:C=42}.

If there are  $z_{25}$-stroke and  $w_{35}$-stroke. Then vertex $\textbf{5}$   is  $z$-and $w$-circled. By the circling method, all vertices are   $z$-and $w$-circled. This the third diagram in Figure  \ref{fig:C=42}.
\begin{figure}[!h]
	
	\centering
	\includegraphics[width=0.8\textwidth]{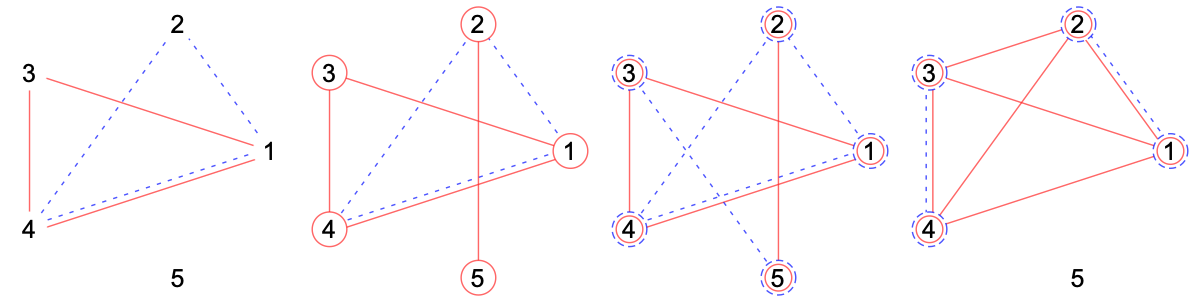}
	
	\caption{Four diagrams for $C=4$, two $zw$-edges. The  fourth one has been excluded.}
	\label{fig:C=42}
\end{figure}

\textbf{Hence,  there are three  possible  diagrams,  the first three  in Figure \ref{fig:C=42}.}

\subsubsection{}

Suppose that there are one $zw$-edges and two $z$-edges  emanating from vertex $\textbf{1}$, as in the  fourth diagram  in  Figure \ref{fig:C=42}. Then there are  $z_{23}$-stroke, $z_{24}$-stroke  and $z_{34}$-stroke by Rule VI.  Note that vertex $\textbf{1}$ is $z$-circled, then the circling method implies all vertices except possibly vertex $\textbf{5}$   are $w$-circled. Then there is $w$-stroke emanating from $\textbf{3}$ and vertex $\textbf{4}$.  The $w$-stroke may go to vertex $\textbf{5}$,  or it is a  $w_{34}$-stroke.

If  all two  $w$-strokes go to vertex $\textbf{5}$, then Rule VI implies the existence of $w_{34}$-stroke, which contradicts with $C=4$.  Then the $w$-strokes from $\textbf{3}$ and vertex $\textbf{4}$ is $w_{34}$-stroke, and  vertex $\textbf{5}$   is disconnected.

Consider the circling the the diagram. We have three different cases: all vertices are not $z$-circled, only two of the first four vertices, say, vertex $\textbf{1}$ and vertex $\textbf{2}$ are $z$-circled, or all the first four vertices are $z$-circled.

If none of the first four vertices is $z$-circled, then  we have $\Gamma_{1234}=0$ and $L_{1234}=0$ by Rule IV and Proposition \ref{Prp:LI}.  This is a contradiction.

If only vertex $\textbf{1}$ and vertex $\textbf{2}$ are $z$-circled,  we have $\Gamma_1+\Gamma_2=0$ by Rule IV. On the other hand, Corollary \ref{Cor:sumT12}  implies that $\Gamma_1+\Gamma_2\ne0$.  This is a contradiction.

Then all the first four vertices are $z$-circled. This gives the   fourth diagram  in  Figure \ref{fig:C=42}. However, this contradicts with Proposition \ref{Prp:dumbbell},  so excluded. 

\textbf{Hence,  there is no  possible  diagram, i.e.,  the fourth one  in Figure \ref{fig:C=42} is impossible. }

\subsubsection{}
Suppose that from vertex $\textbf{1}$ there are three  $w$-edges go to vertex $\textbf{2}$,   vertex $\textbf{3}$  and vertex $\textbf{4}$  respectively and one $z$-edges goes to vertex $\textbf{5}$  .  There is a $w$-stroke between any pair of $\{ \textbf{1, 2, 3, 4} \}$ by Rule VI, and the four vertices are all $z$-circled.

Then there are $z$-strokes emanating from vertex $\textbf{2}$,   vertex $\textbf{3}$  and vertex $\textbf{4}$ .
None of them can go to vertex $\textbf{5}$   by Rule VI. Then there are $z_{23}$-, $z_{24}$- and $z_{34}$-strokes. This contradict with $C=4$. \textbf{Hence, there is no possible diagram in this case.}

\begin{figure}[!h]
	
	\centering
	\includegraphics[width=0.8\textwidth]{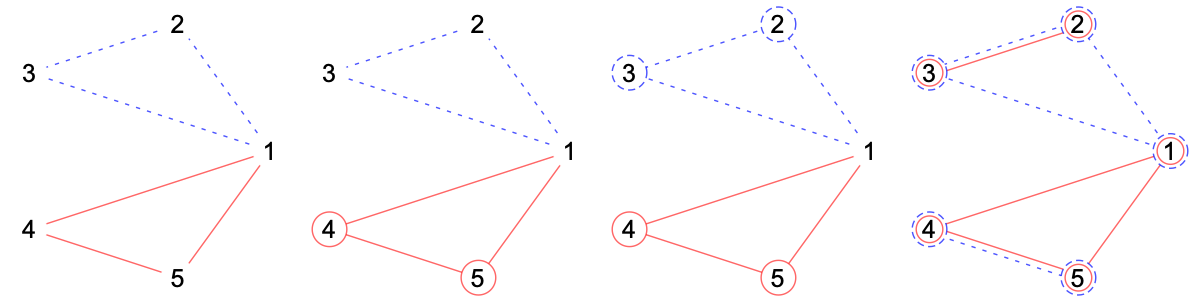}

	\caption{Four diagrams for $C=4$, no $zw$-edges. }
	\label{fig:C=43}
\end{figure}

\subsubsection{}
Suppose that there are two  $w$-edges and two $z$-edges  emanating from vertex $\textbf{1}$, with numeration as in the first diagram of Figure \ref{fig:C=43}.
By Rule VI, there is $w_{23}$- and $z_{45}$-stroke. Thus, we have two attached triangles. By Rule VI, if there is more stroke, it must be $z_{23}$- and/or $w_{45}$-stroke. \\

{\textbf{Case I}}: If there is no more stroke, vertex $\textbf{2}$ and vertex $\textbf{3}$  can only be $w$-circled, and
 vertex $\textbf{4}$  and vertex $\textbf{5}$   can only be $z$-circled. Then vertex $\textbf{1}$ can not be circled, otherwise, the circling method would lead to contradiction.  Then we have the  first three  diagrams in Figure \ref{fig:C=43}. 

{\textbf{Case II}}:  If there is  only $z_{23}$-stroke, the circling method implies vertex $\textbf{1}$, vertex $\textbf{2}$ and vertex $\textbf{3}$  are $z$-circled. Note that only vertex $\textbf{2}$ and vertex $\textbf{3}$  can be $w$-circled. There are two possibilities: whether vertex $\textbf{2}$ and vertex $\textbf{3}$  are both  $w$-circled or both  not $w$-circled.

 If vertex $\textbf{2}$ and vertex $\textbf{3}$  are both  $w$-circled, we  have $\Gamma_2+\Gamma_3=0$ by Rule IV. On the other hand, since  vertices $\textbf{1}$,  $\textbf{4}$, and $\textbf{5}$ are not $w$-circled, they are not $w$-close to vertex $\textbf{2}$ and vertex $\textbf{3}$ . Then
 Proposition \ref{Prp:sumT12} implies that $\Gamma_2+\Gamma_3\ne0$.  This is a contradiction.

  If vertex $\textbf{2}$ and vertex $\textbf{3}$  are both not  $w$-circled, we  have $\Gamma_2+\Gamma_3=0$  and $L_{123}=0$ by Rule IV and Proposition \ref{Prp:LI}. This is a contradiction. Hence there is no possible diagram in this case.

{\textbf{Case III}}: If there are  $z_{23}$- and $w_{45}$-strokes, the circling method implies vertex $\textbf{2}$ and vertex $\textbf{3}$  are $z$-circled, vertex $\textbf{4}$  and vertex $\textbf{5}$   are $w$-circled, and vertex $\textbf{1}$ is  $z$- and $w$-circled. 
By Rule I,  vertex $\textbf{2}$ and vertex $\textbf{3}$ are both $z$-circled, and vertex $\textbf{4}$ and vertex $\textbf{5}$ are both $w$-circled.  This is  the last diagram  in Figure \ref{fig:C=43}. 

\textbf{Hence, there are  four possible diagrams in this case, as shown in Figure \ref{fig:C=43}.}\\





\textbf{ In summary,  among the thirteen diagrams in Figure \ref{fig:C=41}, \ref{fig:C=42} and \ref{fig:C=43}, we have excluded the first and third one in Figure \ref{fig:C=41} and the fourth one in Figure \ref{fig:C=42}.   We have ten possible  diagrams.}

\subsection{$C=5$}

There are three  cases: two $zw$-edges,  one $zw$-edge with one $z$-edge and two  $w$-edges,    one $zw$-edge with three $z$-edges.

\subsubsection{}
Suppose that there are two $zw$-edges and one $z$-edges  emanating from vertex $\textbf{1}$, with numeration as in the first digram of Figure \ref{fig:C=51}.  Rule VI implies the  existence of $zw_{23}$-, $z_{24}$- and $z_{34}$-edges. Then vertex $\textbf{5}$   can be either disconnected  or connects with vertex $\textbf{4}$  by one  $w$-edge, otherwise, it would contradict with $C=5$.

For the disconnected diagram, note that any of the connected four vertices can not be $w$-circled. Otherwise, the circling method implies vertex $\textbf{4}$  is $w$-circled, which is a contradiction.  There are three cases: none of the vertices are circled,  all four vertices except vertex $\textbf{4}$  are $z$-circled, or all four vertices are $z$-circled.

If none of the vertices are circled,  by Proposition \ref{Prp:LI} we have   $L_{123}=0$ since  vertices $\textbf{1}$,  $\textbf{2}$, and $\textbf{3}$ form a triangle with no $w$-circled attached. Similarly, we have $L_{1234}=0$ by Proposition \ref{Prp:LI}. This is a contradiction since
\begin{equation*}
L_{1234}=L_{123}+\Gamma_4(\Gamma_1+\Gamma_2+\Gamma_3), \  (\Gamma_1+\Gamma_2+\Gamma_3)^2-2L_{123}\ne 0.
\end{equation*}

If only  vertices $\textbf{1}$,  $\textbf{2}$, and $\textbf{3}$ are $z$-circled, we also have  $L_{123}$. By Rule IV, we have $\Gamma_1+\Gamma_2+\Gamma_3=0$. This is a contradiction.

Then we have only one possible diagram for the disconnected diagram, and it is the first in Figure \ref{fig:C=51}.

For the connected diagram with $w_{45}$-edge, the circling method implies that all vertices are $w$-circled. Note that vertex $\textbf{4}$  and vertex $\textbf{5}$   can not be $z$-circled. Then we have two cases: all the five vertices are not $z$-circled, or  vertices $\textbf{1}$,  $\textbf{2}$, and $\textbf{3}$ are $z$-circled.

If all the five vertices are not $z$-circled, we have $\Gamma_1+\Gamma_2+\Gamma_3=0$ by Rule IV and $L_{1234}=0$ by Proposition \ref{Prp:LI}. This is a contradiction. 

Then, we have only one possible diagram for the connected diagram, and it is the second in Figure \ref{fig:C=51}.

\begin{figure}[!h]
	
	\centering
	\includegraphics[width=0.4\textwidth]{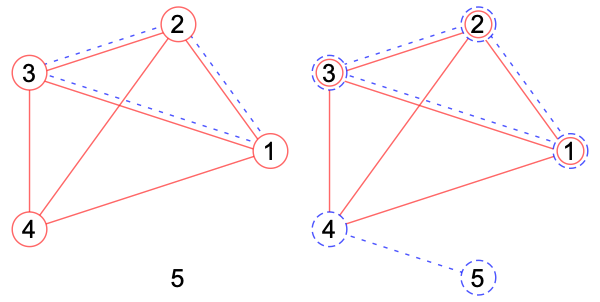}

	\caption{Two diagrams for $C=5$, two $zw$-edges. }
	\label{fig:C=51}
\end{figure}

\textbf{Hence, we have two possible diagrams, shown in Figure \ref{fig:C=51}. }

\subsubsection{}

Suppose that there are one $zw$-edge, one $z$-edge, and two  $w$-edges emanating from vertex $\textbf{1}$, with numeration as in the first diagram of Figure \ref{fig:C=52}.  Rule VI implies that  vertices $\textbf{1}$,  $\textbf{2}$, $\textbf{3}$, and $\textbf{4}$ are fully $w$-stroked, and that
 vertices $\textbf{1}$,  $\textbf{3}$, and $\textbf{5}$ are fully $z$-stroked. There is no more stroke emanating from  vertices $\textbf{1}$,  $\textbf{3}$, and $\textbf{5}$, since that would contradict with $C=5$. There are two cases, $z_{24}$-stroke is present or not.

If it is not present,  vertex $\textbf{2}$ and vertex $\textbf{4}$  can only be $w$-circled, and vertex $\textbf{5}$   can only be $z$-circled. Then vertex $\textbf{1}$ and vertex $\textbf{3}$   can not be circled, otherwise, the circling method would lead to contradiction.  Then we have the first two diagrams in Figure \ref{fig:C=52} by Rule IV.

If $z_{24}$-stroke is present, then  vertices $\textbf{1}$,  $\textbf{2}$, $\textbf{3}$, and $\textbf{4}$ are all $z$-circled. Vertex $\textbf{1}$ and vertex $\textbf{3}$  can not be $w$-circled, which would lead to vertex $\textbf{5}$   also $w$-circled and a contradiction. Vertex $\textbf{2}$ and vertex $\textbf{4}$  are $w$-close, so they are both $w$-circled or both not $w$-circled. If they are both $w$-circled,  then Rule IV implies that
$\Gamma_2+\Gamma_4=0$. On the other hand, Proposition \ref{Prp:sumT12} implies that
$\Gamma_2+\Gamma_4\ne0$. This is a contradiction.

Then according to whether vertex $\textbf{5}$   is $z$-circled or not, we have two cases, which are the last two diagrams in
Figure  \ref{fig:C=52}.

The third diagram in Figure  \ref{fig:C=52} is impossible.  We would have $L_{1234}=0$ and $\sum_{j=1}^4\Gamma_j=0$ by Proposition \ref{Prp:LI} and Rule IV.  Hence, we have only one  possible diagram if  $z_{24}$-stroke is present, and it is the last  diagram in Figure \ref{fig:C=52}.

\begin{figure}[h!]
	
	\centering
	\includegraphics[width=0.8\textwidth]{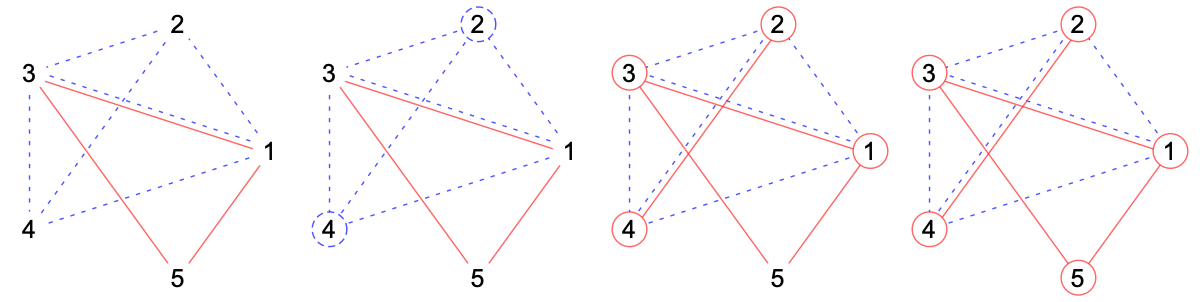}
	
		\caption{Four diagrams for $C=5$, one $zw$-edges. The  third one has been excluded.}
	\label{fig:C=52}
\end{figure}

\textbf{Hence, we have three possible diagrams, the first two and the last one in  Figure \ref{fig:C=52}. }

\subsubsection{}

Suppose that there is one $zw$-edge and three  $w$-edges emanating from vertex $\textbf{1}$ .  Let us say, the $zw$-edge goes to vertex $\textbf{2}$ and the other edges go to the other three vertices.  Rule VI implies that  vertices $\textbf{1, 2, 3, 4, 5}$ are fully $w$-stroked. The circling method implies that all vertices are $z$-circled. Then there are $z$-strokes emanating from vertices $\textbf{3}$, $\textbf{4}$, and $\textbf{5}$. Since $C=5$ at vertex $\textbf{1}$ and vertex $\textbf{2}$,  the $z$-strokes from vertices $\textbf{3}$, $\textbf{4}$, and $\textbf{5}$ must go to vertices $\textbf{3}$, $\textbf{4}$, and $\textbf{5}$. By Rule VI, they form a triangle of $z$-strokes. This is a contradiction with $C=5$.  \textbf{Hence, there is no possible diagram in this case.}  \\

\textbf{In summary,  among the six  diagrams in Figure \ref{fig:C=51} and \ref{fig:C=52}, we have excluded the third one in Figure \ref{fig:C=52}.   We have five possible  diagrams.}

\subsection{$C=6$}

There are three  cases: three  $zw$-edges,  two $zw$-edge with one edge in each color,   two $zw$-edge with two $z$-edge.

\begin{figure}[!h]

	\centering
	\includegraphics[width=0.4\textwidth]{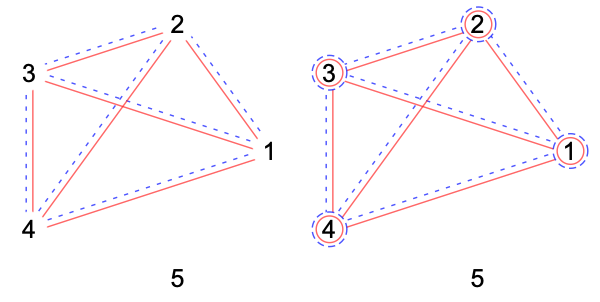}

	\caption{Two diagrams for $C=6$, three $zw$-edges. The  second one has been excluded.}
	\label{fig:C=61}
\end{figure}

\subsubsection{}
Suppose that there are three $zw$-edges  emanating from vertex $\textbf{1}$, with numeration as in the first digram of Figure \ref{fig:C=61}.  Rule VI implies that the  vertices $\textbf{1}$,  $\textbf{2}$, $\textbf{3}$, and $\textbf{4}$ are fully $zw$-edged. Then vertex $\textbf{5}$   must be disconnected  since $C=6$. The first four vertices can be circled in the same way by the circling method.

 If all the first four vertices are $z$-circled but not $w$-circled, then we would have
 $L_{1234}=0$ and $\sum_{j=1}^4\Gamma_j=0$ by Proposition \ref{Prp:LI} and Rule IV.  This is one contradiction.

Then we have two possible diagrams, as shown in Figure \ref{fig:C=61}. However, the second one is impossible by Proposition \ref{Prp:quadrilateral}.

\textbf{Hence, there is only one  possible diagram if   there are three $zw$-edges, the first one   in Figure \ref{fig:C=61}.}

\subsubsection{}
Suppose that there are two $zw$-edges and two $z$-edges emanating from vertex $\textbf{1}$, with numeration as in the first diagram of Figure \ref{fig:C=62}.  Rule VI implies the existence of $z_{25}$-, $z_{35}$-,  $z_{24}$-, $z_{45}$- and $z_{34}$-strokes,   and  that the  vertices $\textbf{1}$,  $\textbf{2}$, and $\textbf{3}$ are fully $zw$-edged. If there is more stroke, it must be $w_{45}$-stroke since $C=6$.

If there is no more stroke, then the vertices can only be $z$-circled. There are two cases, either  vertices $\textbf{1}$,  $\textbf{2}$, and $\textbf{3}$ are $z$-circled or not.

If  vertices $\textbf{1}$,  $\textbf{2}$, and $\textbf{3}$ are $z$-circled, then either vertex $\textbf{4}$  or vertex $\textbf{5}$   must also be $z$-circled. Otherwise, we have $\Gamma_1+\Gamma_2+\Gamma_3=0$ and $L_{123}=0$ by  Proposition \ref{Prp:LI} and Rule IV.  This is one contradiction. Then we have the first two diagrams in Figure \ref{fig:C=62}.

If  vertices $\textbf{1}$,  $\textbf{2}$, and $\textbf{3}$ are not  $z$-circled, then vertex $\textbf{4}$  and vertex $\textbf{5}$   are both $z$-circled or both not $z$-circled. Then we have the second two  diagrams in Figure \ref{fig:C=62}.

If $w_{45}$-stroke is present, then the circling method implies that all vertices are $w$-circled. Rule IV implies that $\Gamma_{123}=0$. If none of vertices $\textbf{1}$,  $\textbf{2}$, and $\textbf{3}$ are  $z$-circled,  Proposition \ref{Prp:LI} yields $L_{123}=0$, a contradiction. Then, at least one, hence all three vertices $\textbf{1}$,  $\textbf{2}$, and $\textbf{3}$ are  $z$-circled. 
Then there are two cases, according to whether vertices $\textbf{4}$ and $\textbf{5}$ are  $z$-circled.  Hence,  we have the last  two  diagrams in Figure \ref{fig:C=62}.

\begin{figure}[!h]

	\centering
	\includegraphics[width=0.6\textwidth]{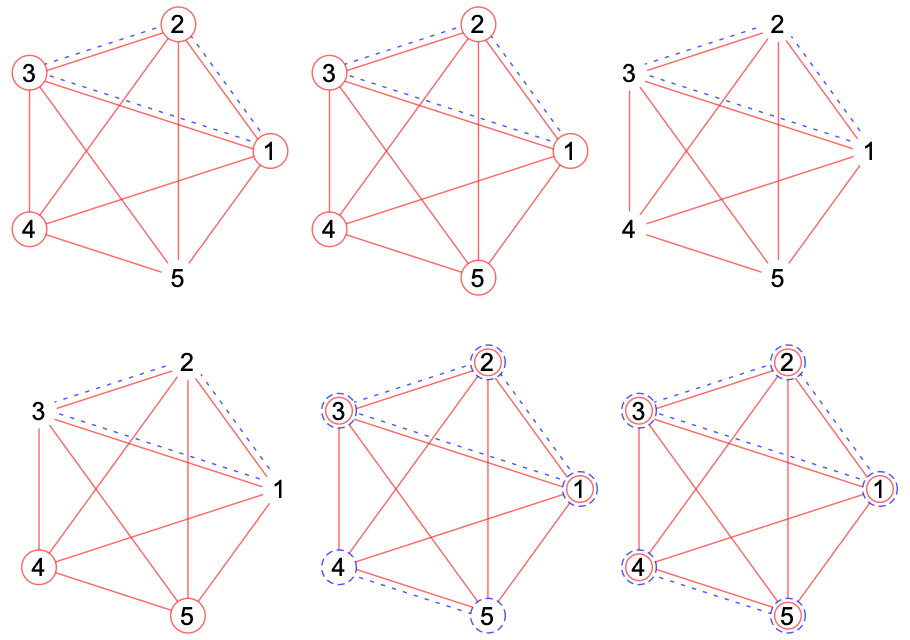}

	\caption{Six diagrams for $C=6$, two  $zw$-edges, case 1. }
	\label{fig:C=62}
\end{figure}

\textbf{Hence, there are six possible diagrams in this case, as shown in Figure \ref{fig:C=62}. }

\subsubsection{}
Suppose that there are two $zw$-edges and one edge of each color  emanating from vertex $\textbf{1}$, with numeration as in the first diagram of Figure \ref{fig:C=63}.  Rule VI implies the existence of $w_{25}$-, $w_{35}$-,  $z_{24}$-,  and $z_{34}$-strokes,   and  that the  vertices $\textbf{1}$,  $\textbf{2}$, and $\textbf{3}$ are fully $zw$-edged. If there is more stroke, it must be $w_{45}$-stroke, but it would lead to extra stroke emanating from vertex $\textbf{1}$, which contradicts with $C=6$.

Vertex $\textbf{4}$  can only be $w$-circled, and vertex $\textbf{4}$  can only be $z$-circled. Then  vertices $\textbf{1}$,  2 and  3  can not be circled, otherwise, the circling method would lead to contradiction.  \textbf{Then we have the  diagram in Figure \ref{fig:C=63} by Rule IV. }

\begin{figure}[h!]

	\centering
	\includegraphics[width=0.2\textwidth]{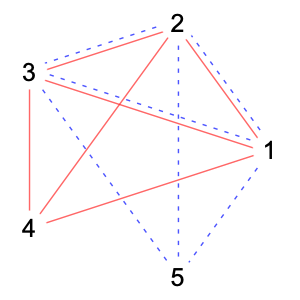}

	\caption{One diagram for $C=6$, two  $zw$-edges, case 2.}
	\label{fig:C=63}
\end{figure}

\textbf{In summary,   among the nine diagrams of  Figure \ref{fig:C=61}, \ref{fig:C=62},   and \ref{fig:C=63}, we exclude the second one of 
Figure \ref{fig:C=61}. That is, we have eight possible diagrams. }

\subsection{$C=7$}
Suppose that there are three $zw$-edges and one $z$-edge  emanating from vertex $\textbf{1}$, with numeration as in the  diagram of Figure \ref{fig:C=7}.  Rule VI implies that the  vertices $\textbf{1}$,  $\textbf{2}$, $\textbf{3}$, and $\textbf{4}$ are fully $zw$-edged and that  vertex $\textbf{5}$   connects with the first four vertices by one $z$-edge. There is no more stroke since $C=7$. If any vertex is $w$-circled, all are $w$-circled, which would lead to $w$-stroke emanating from vertex $\textbf{5}$   . This is a contradiction.  There are two cases, either vertex $\textbf{1}$ is $z$-circled or not.

If vertex $\textbf{1}$ is not $z$-circled, then none of the vertices is $z$-colored by the circling method. Then $L_{1234}=0$ and $L=0$ by  Proposition \ref{Prp:LI}, a contradiction.

If vertex $\textbf{1}$ is $z$-circled,  then  vertices $\textbf{1}$,  $\textbf{2}$, $\textbf{3}$, and $\textbf{4}$ are all $z$-circled. In this case,  vertex $\textbf{5}$   must be  $z$-circled. Otherwise, 
we have  $L_{1234}=0$ and $\sum_{j=1}^4\Gamma_j=0$ by  Proposition \ref{Prp:LI} and Rule IV,  a contradiction.

\textbf{Hence, we only have one diagram in the case of $C=7$, as in Figure \ref{fig:C=7}.}

\begin{figure}[!h]

\centering
\includegraphics[width=0.2\textwidth]{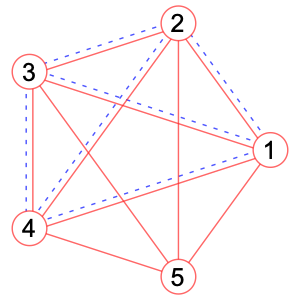}

	\caption{One diagram for $C=7$. }
	\label{fig:C=7}
\end{figure}


\subsection{$C=8$}
Suppose that there are four $zw$-edges   emanating from vertex $\textbf{1}$ . Rule VI implies that the  vertices $\textbf{1, 2, 3, 4, 5}$ are fully $zw$-edged. Since all vertices are both $z$-close and $w$-close,the vertices are all $z$- and $w$-circled, or all not circled. If they are all just $z$-circled but not $w$-circled, then we have 
\[  \Gamma=0, L=0,  \]
by Rule IV and Proposition \ref{Prp:LI}, a contradiction.

\begin{figure}[!h]

	\centering
	\includegraphics[width=0.4\textwidth]{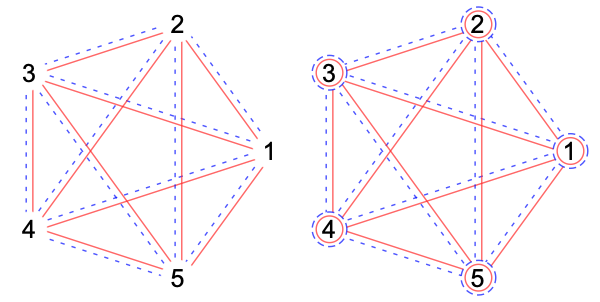}

	\caption{Two diagrams for $C=8$. }
	\label{fig:C=8}
\end{figure}

\textbf{Hence, we  have two  diagrams in the case of $C=8$, as in Figure \ref{fig:C=8}. } \\

\section{Conclusion}

\label{sec:diagram&constraints}

\indent\par 

In the searching for all possible two-colored diagrams of Sect. \ref{sec:matrixrules}, we have found 39 of them, as shown in Figure \ref{fig:C=0}-\ref{fig:C=8}. \textbf{Among them, we have  excluded eight of them}, i.e., the first diagram in Figure \ref{fig:C=2}, the two diagrams in Figure \ref{fig:C=3}, the second and fourth diagram in Figure \ref{fig:C=41}, the fourth diagram  in Figure \ref{fig:C=42}, the third diagram in Figure \ref{fig:C=52}, and
the second diagram in Figure \ref{fig:C=61}. 

\textbf{The conclusion  is that any singular sequence should converge to one of the remaining  thirty-one diagrams}. We further divide  them in two lists,  see Figure \ref{fig:list1}  and  \ref{fig:list2}. 
The first list contains  \textbf{nine diagrams}  and the second list contains  \textbf{ 22 diagrams}, where these diagrams are ordered not
by the number of strokes from a bicolored vertex, but by the number of circles.  Notably, the first list of nine diagrams can be 
excluded by some further arguments. We will report this in a future work.

\begin{figure}[!h]

	\centering
	\includegraphics[width=0.9\textwidth]{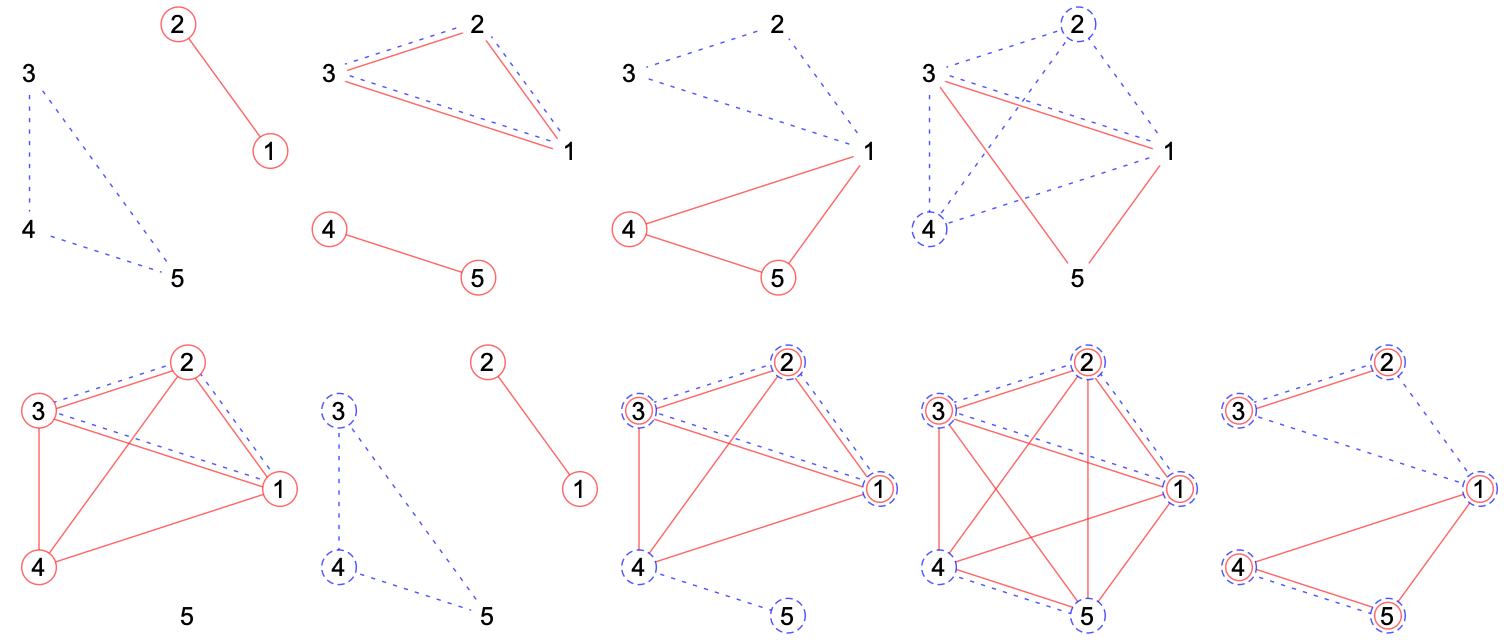}

	\caption{The nine  impossible diagrams.}
	\label{fig:list1}
\end{figure}

\begin{figure}[!h]

	\centering
	\includegraphics[width=0.9\textwidth]{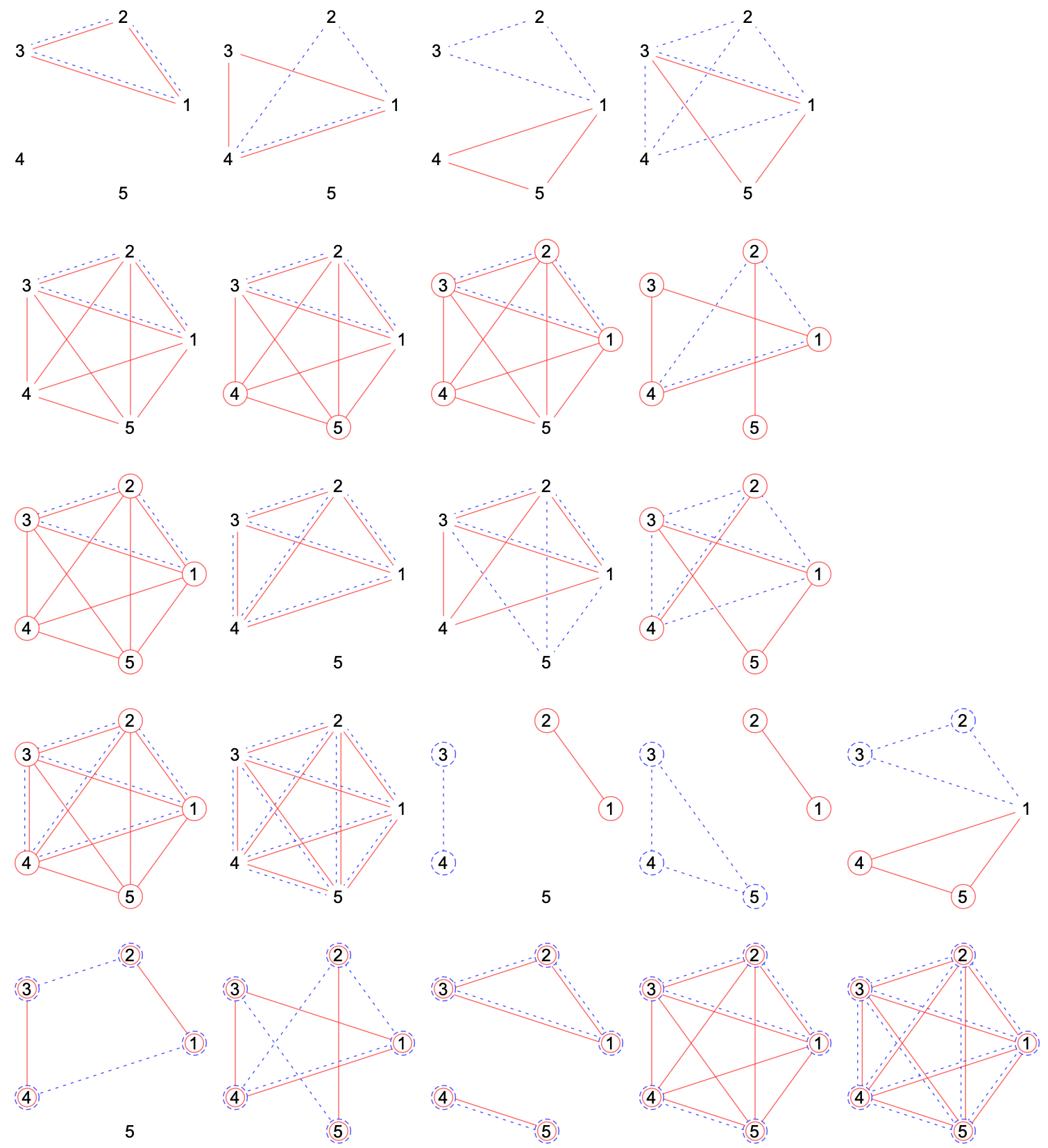}

	\caption{The 22 possible diagrams.  }
	\label{fig:list2}
\end{figure}

\newpage

\end{document}